\newcommand{\QQ}{\mathbb{Q}}
\newcommand{\RR}{\mathbb{R}}
\newcommand{\CC}{\mathbb{C}}
\newcommand{\HH}{\operatorname{H}}
\newcommand{\rH}{\widetilde{\operatorname{H}}}
\newcommand{\tensor}{\otimes}
\newcommand{\LL}{\mathbb{L}}
\newcommand{\Com}{Com}
\newcommand{\Lie}{Lie}
\newcommand{\Coop}{\mathscr{C}}
\newcommand{\Op}{\mathscr{A}}
\newcommand{\adjunction}[4]{\xymatrix{ #1 \ar@<1ex>[rr]^-{#3} && #2 \ar@<1ex>[ll]^-{#4}}}
\newcommand{\G}{\mathbb{G}}
\newcommand{\Hom}{\operatorname{Hom}}
\newcommand{\Tor}{\operatorname{Tor}}
\newcommand{\as}{\text{\normalfont{<}}}
\newcommand{\Diag}{\mathscr{D}}
\newcommand{\im}{\operatorname{im}}
\newcommand{\Tw}{\operatorname{Tw}}
\newcommand{\Kos}{\operatorname{Kos}}
\newtheorem{theorem}{Theorem}
\newtheorem{proposition}[theorem]{Proposition}
\newtheorem{corollary}[theorem]{Corollary}
\theoremstyle{definition}
\newtheorem{definition}[theorem]{Definition}
\newtheorem{example}[theorem]{Example}
\newtheorem{remark}[theorem]{Remark}
\title{Koszul spaces}
\author{Alexander Berglund}
\address{Department of Mathematical Sciences, University of Copenhagen}
\email{alexb@math.ku.dk}
\thanks{Supported by the Danish National Research Foundation (DNRF) through the Centre for Symmetry and Deformation}
\begin{document}

\begin{abstract}
We prove that a nilpotent space is both formal and coformal if and only if it is rationally homotopy equivalent to the derived spatial realization of a graded commutative Koszul algebra. We call such spaces \emph{Koszul spaces} and we show that the rational homotopy groups and the rational homology of iterated loop spaces of Koszul spaces can be computed by applying certain Koszul duality constructions to the cohomology algebra.
\end{abstract}

\maketitle

\section{Introduction}
The purpose of this paper is to develop some ideas from Koszul duality theory within the framework of rational homotopy theory. The technology of Koszul duality for algebras over Koszul operads does not seem to have been exploited to its full potential in algebraic topology yet --- at least not explicitly --- and this paper is meant to take steps in this direction.

Our main result is that a space is simultaneously formal and coformal if and only if it is a \emph{Koszul space}, by which we mean that it is rationally homotopy equivalent to the derived spatial realization of a graded commutative Koszul algebra. Recall that a space is called formal if its rational homotopy type is a formal consequence of its rational cohomology algebra \cite[\S 12]{Sullivan}. This means that it is in principle possible to extract any rational homotopy invariant, such as the rational homotopy groups, from the cohomology algebra. However, doing this in practice entails the non-trivial algebraic problem of constructing a minimal model for the cohomology. Similar remarks apply to coformal spaces, which are spaces whose rational homotopy type is a formal consequence of the rational homotopy Lie algebra. A consequence of our main result is that for spaces that are \emph{both} formal and coformal, the rational homotopy groups, with Whitehead products, and the rational homology of any iterated loop space, with Pontryagin products and Browder brackets, can be extracted \emph{directly} from the cohomology by applying certain Koszul duality constructions. A key technical result is Theorem \ref{thm:koszul algebra} which should be of independent interest; it has as a striking consequence an intrinsic characterization of Koszul algebras (over a Koszul operad) in terms of formality of the bar construction. The validity of such a characterization seems to have been generally accepted as true by experts, but we have been unable to find an account of this in the literature.

The point of view presented in this paper will serve as a background for the theory of \emph{Koszul models} that is the subject of a sequel paper, and of which the theory developed in this paper is a facet.

\vskip10pt

Before we can give the precise statements of our main results, Theorems 2, 3 and 4, we need to recall some facts from rational homotopy theory and Koszul duality theory. Recall \cite{BG,FHT-RHT,Quillen,Sullivan} that the rational homotopy type of a connected nilpotent space $X$ of finite $\QQ$-type is modeled algebraically by a commutative differential graded algebra $A_{PL}^*(X)$ with cohomology $\HH^*(X;\QQ)$, or alternatively by a differential graded Lie algebra $\lambda(X)$ with homology the rational homotopy groups $\pi_*(\Omega X)\tensor \QQ$ with Samelson products (the degree $0$ part has to be given an appropriate interpretation if $\pi_1 (X)$ is non-abelian, see Theorem \ref{thm:nil}).
\begin{itemize}
\item $X$ is called \emph{formal} if $A_{PL}^*(X)$ is quasi-isomorphic to $\HH^*(X;\QQ)$ as a commutative differential graded algebra.
\item $X$ is called \emph{coformal} if $\lambda(X)$ is quasi-isomorphic to $\pi_*(\Omega X)\tensor \QQ$ as a differential graded Lie algebra.
\end{itemize}
We refer to \cite{NM} for a further discussion about formality and coformality. The contravariant functor $A_{PL}^*$ from spaces to commutative differential graded algebras induces an equivalence between the homotopy categories of connected nilpotent rational spaces of finite $\QQ$-type and minimal algebras of finite type. The inverse is given by the \emph{spatial realization} functor $\langle - \rangle$, see \cite{BG,FHT-RHT,Sullivan}. By the \emph{derived spatial realization} of a commutative differential graded algebra we mean the spatial realization of its minimal model. Finally, recall that a graded commutative algebra $A$ is called a \emph{Koszul algebra} if it is generated by elements $x_i$ modulo certain quadratic relations
$$\sum_{i,j} c_{ij} x_ix_j = 0$$
such that $\Tor_{i,j}^A(\QQ,\QQ) = 0$ for $i\ne j$ \cite{Priddy}. Here, the extra grading on $\Tor$ comes from the weight grading on $A$ determined by assigning weight $1$ to the generators $x_i$. There is also a notion of a \emph{Koszul Lie algebra}, see Section \ref{sec:koszul}.

\begin{definition}
A \emph{Koszul space} is a space which is rationally homotopy equivalent to the derived spatial realization of a graded commutative Koszul algebra.
\end{definition}
It is not difficult to see that a Koszul space is both formal and coformal. Our first main result implies that the converse is true.
\begin{theorem} \label{thm:koszul space}
Let $X$ be a connected nilpotent space of finite $\QQ$-type. The following are equivalent:
\begin{enumerate}
\item $X$ is both formal and coformal. \label{ks1}
\item $X$ is formal and the cohomology $\HH^*(X;\QQ)$ is a Koszul algebra. \label{ks2}
\item $X$ is coformal and the rational homotopy $\pi_*(\Omega X)\tensor \QQ$ is a Koszul Lie algebra. \label{ks3}
\item $X$ is a Koszul space. \label{ks4}
\end{enumerate}
\end{theorem}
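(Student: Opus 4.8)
The plan is to organize the four conditions around the Koszul duality between the commutative and Lie operads, with Theorem \ref{thm:koszul algebra} as the central engine. The implications relating (\ref{ks2}) and (\ref{ks4}) are the cheap ones. If $X$ is formal then by definition $A_{PL}^*(X)$ is quasi-isomorphic, as a commutative differential graded algebra, to $A := \HH^*(X;\QQ)$, so $X$ is rationally homotopy equivalent to the derived spatial realization $\langle A\rangle$. Since $A$ carries the zero differential we have $\HH^*(\langle A\rangle;\QQ) = A$, and therefore the assertion ``$X$ is formal and $A$ is Koszul'' is word-for-word the assertion that $X$ is a Koszul space. This settles (\ref{ks2}) $\Leftrightarrow$ (\ref{ks4}) with essentially no work; the elementary direction (\ref{ks4}) $\Rightarrow$ (\ref{ks1}) noted above will then also fall out once the remaining equivalences are established.

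The heart of the matter is (\ref{ks1}) $\Leftrightarrow$ (\ref{ks2}). First I would recall the bar--cobar dictionary supplied by the Koszul duality between the commutative and Lie operads: for a formal space the rational homotopy Lie algebra $L := \pi_*(\Omega X)\tensor\QQ$ is recovered from $A$ through the bar construction $B(A)$, which is naturally a coalgebra over the Koszul dual cooperad (a shifted Lie cooperad), with $L$ arising as the homology of the Lie algebra obtained from $B(A)$ by cobar. Under this dictionary the differential graded Lie model $\lambda(X)$ is quasi-isomorphic to the cobar of $B(A)$, so $X$ being coformal --- that is, $\lambda(X)$ being formal --- is equivalent to $B(A)$ being formal. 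At this point Theorem \ref{thm:koszul algebra} applies verbatim: formality of $B(A)$ is equivalent to $A$ being a Koszul algebra. Chaining these equivalences, for formal $X$ coformality is the same as Koszulness of $A$, which is precisely (\ref{ks1}) $\Leftrightarrow$ (\ref{ks2}).

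The last equivalence (\ref{ks1}) $\Leftrightarrow$ (\ref{ks3}) is obtained by running the identical argument with the roles of the two operads interchanged, which is legitimate because the Lie operad is itself Koszul and hence also covered by Theorem \ref{thm:koszul algebra}. Starting from a coformal space, its rational homotopy type is pinned down by $L$, and the commutative model is recovered from $L$ by the Chevalley--Eilenberg construction, which is the bar construction for the Lie operad. Formality of $X$ then becomes formality of this bar construction, which by Theorem \ref{thm:koszul algebra} is equivalent to $L$ being a Koszul Lie algebra. Combined with the preceding paragraph this closes the circle and proves all four conditions equivalent; as a byproduct one reads off the expected duality $L \cong A^{!}$ between the Koszul commutative algebra $A$ and the Koszul Lie algebra $L$.

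The step I expect to cost the most is making the bar--cobar dictionary of the second paragraph precise enough to license the slogan ``coformal $\Leftrightarrow$ $B(A)$ formal.'' Three points require care. First, one must fix the correct variant of the bar construction and the correct operadic suspensions so that the internal weight grading on $B(A)$ is aligned with the homological grading that Theorem \ref{thm:koszul algebra} reads off; a mismatch here would make the Koszulness criterion meaningless. Second, the nilpotence and finite $\QQ$-type hypotheses are needed to guarantee that the linear dualizations implicit in passing between commutative and Lie models are homologically well behaved. Third, one has to verify that cobar not only preserves but also reflects formality in this setting, so that formality of the Lie model $\lambda(X) \simeq \Omega B(A)$ genuinely forces formality of $B(A)$ and not merely of some larger resolution. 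Once these compatibilities are checked, Theorem \ref{thm:koszul algebra} carries all the real weight.
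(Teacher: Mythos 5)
Your proposal follows essentially the same route as the paper's proof: the equivalence of \eqref{ks1}, \eqref{ks2}, \eqref{ks3} is obtained by translating formality/coformality through the Quillen--Sullivan bar--cobar dictionary and then invoking Theorem \ref{thm:koszul algebra}, while \eqref{ks2}$\Leftrightarrow$\eqref{ks4} is handled via spatial realization. Two remarks are in order, because the steps you label as costing ``essentially no work'' are exactly where the paper spends its citations.

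First, \eqref{ks2}$\Leftrightarrow$\eqref{ks4} is not ``word-for-word''. A Koszul space is by definition rationally equivalent to $\langle M_A\rangle$ for \emph{some} Koszul algebra $A$, which is not a priori $\HH^*(X;\QQ)$; to get \eqref{ks4}$\Rightarrow$\eqref{ks2} one must know that $A_{PL}^*(\langle M_A\rangle)$ is quasi-isomorphic to $M_A$, so that $M_A$ is also a minimal model of $X$, whence $X$ is formal and $\HH^*(X;\QQ)\cong A$. This is the Sullivan--de Rham Equivalence Theorem together with the Localization Theorem of Bousfield--Gugenheim \cite{BG}, which is what the paper cites; your assertion $\HH^*(\langle A\rangle;\QQ)=A$ rests on the same input, and the clause ``since $A$ carries the zero differential'' is a non sequitur as a justification --- it only explains why $\HH(M_A)=A$, not why realization recovers the cohomology. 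Second, the dictionary obligations you flag in your last paragraph are discharged in the paper by Theorem \ref{thm:bl} (Majewski's proof of the Baues--Lemaire conjecture, identifying $\mathscr{L}(M_X^\vee)$ with $\lambda(X)$) and, crucially for the stated nilpotent generality, by Theorem \ref{thm:nil} (Neisendorfer), which makes $\mathscr{L}(M_X^\vee)$ a valid Lie model and interprets $\HH_0$ via the Malcev completion; without this, coformality has no meaning for nilpotent non-simply-connected $X$, and your argument would only prove the simply connected case. Your third caveat (that cobar reflects formality) costs nothing: the paper's notion of weak equivalence of $\Coop$-coalgebras is rigged precisely so that $\Omega_\tau$ preserves and reflects weak equivalences. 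With these references supplied, your argument coincides with the paper's proof.
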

The proof is given at the end of Section \ref{sec:rht} and depends on results from Sections \ref{sec:koszul} and \ref{sec:rht}. Examples of Koszul spaces include configuration spaces of points in $\RR^n$, highly connected manifolds, suspensions, loop spaces, wedges and products of Koszul spaces. These examples are described in more detail in the last section of the paper, Section \ref{sec:examples}.

If $A$ is a graded commutative Koszul algebra its \emph{Koszul dual Lie algebra} $A^{!_{\Lie}}$ is the free graded Lie algebra on generators $\alpha_i$, of homological degree $|\alpha_i| = |x_i|-1$, modulo the \emph{orthogonal relations}. This means that a relation
$$\sum_{i,j} \lambda_{ij} [\alpha_i,\alpha_j] = 0$$
holds if and only if
$$\sum_{i,j} (-1)^{|x_i||\alpha_j|} c_{ij}\lambda_{ij} = 0$$
whenever the coefficients $c_{ij}$ represent a relation among the generators $x_i$ for $A$.

\begin{theorem} \label{thm:koszul dual}
If $X$ is a Koszul space, then homotopy and cohomology are Koszul dual in the sense that there is an isomorphism of graded Lie algebras
$$\pi_*(\Omega X)\tensor \QQ = \HH^*(X;\QQ)^{!_{\Lie}}.$$
\end{theorem}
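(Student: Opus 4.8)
The plan is to read off $\pi_*(\Omega X)\tensor\QQ$ from the cohomology algebra $A:=\HH^*(X;\QQ)$ by transporting it through the Koszul duality between the operads $\Com$ and $\Lie$, and then to use Koszulity of $A$ to show that this transport degenerates onto the purely quadratic recipe defining $A^{!_{\Lie}}$. Since $X$ is a Koszul space, Theorem \ref{thm:koszul space} shows that $X$ is formal and that $A$ is a graded commutative Koszul algebra. Formality means that the Lie model $\lambda(X)$ is determined, up to quasi-isomorphism, by $A$ with zero differential: taking the degreewise dual cocommutative coalgebra $A^\vee$ (which makes sense as $A$ has finite $\QQ$-type) and applying Quillen's functor, one has a quasi-isomorphism $\lambda(X)\simeq\Omega_{\Lie}(A^\vee)$, where $\Omega_{\Lie}$ is the cobar construction realizing the $\Com$--$\Lie$ Koszul duality. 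Consequently $\pi_*(\Omega X)\tensor\QQ\cong\HH_*(\Omega_{\Lie}(A^\vee))$ as graded Lie algebras.

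First I would make the comparison with $A^{!_{\Lie}}$ explicit. The Lie algebra $\Omega_{\Lie}(A^\vee)$ is free on $s^{-1}\overline{A^\vee}$, and the generators $x_i$ of $A$ contribute the classes $\alpha_i:=s^{-1}x_i^\vee$ in homological degree $|\alpha_i|=|x_i|-1$, which accounts for the degree shift in the statement. The quadratic part of the cobar differential is dual to the multiplication $A_1\tensor A_1\to A_2$; since the weight-two component of $A^\vee$ is the annihilator of the relation space of $A$, this differential carries it precisely into the relations among the brackets $[\alpha_i,\alpha_j]$. A bookkeeping of the Koszul signs produced by the operadic suspension relating $\Com$ and $\Lie$ then shows that a relation $\sum\lambda_{ij}[\alpha_i,\alpha_j]=0$ is a boundary exactly when $\sum(-1)^{|x_i||\alpha_j|}c_{ij}\lambda_{ij}=0$ for every relation $\sum c_{ij}x_ix_j=0$ of $A$. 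These being the orthogonal relations, sending $\alpha_i$ to its homology class defines a morphism of graded Lie algebras $A^{!_{\Lie}}\to\pi_*(\Omega X)\tensor\QQ$.

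The crux, and the step I expect to be the main obstacle, is to prove that this morphism is an isomorphism, equivalently that the higher, non-quadratic components of the cobar differential contribute neither new generators nor new relations in homology. This is exactly where the defining property of a Koszul algebra is used: the vanishing $\Tor^A_{i,j}(\QQ,\QQ)=0$ for $i\neq j$ forces the Koszul complex of $A$ to be acyclic, and hence $\Omega_{\Lie}(A^\vee)$ to be formal with homology concentrated in the quadratic dual. I would obtain this degeneration from Theorem \ref{thm:koszul algebra}, which asserts that the bar construction of a Koszul algebra over a Koszul operad is formal, by specializing it to the $\Com$--$\Lie$ situation and dualizing. Granting the degeneration, the map $A^{!_{\Lie}}\to\pi_*(\Omega X)\tensor\QQ$ is a weight-preserving isomorphism of graded Lie algebras, which is the assertion of the theorem. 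As an independent sanity check, Theorem \ref{thm:koszul space}(\ref{ks3}) guarantees that $\pi_*(\Omega X)\tensor\QQ$ is itself a Koszul Lie algebra, and since quadratic Koszul duality is involutive, the identifications of $A$ with the cohomology and of $A^{!_{\Lie}}$ with the homotopy are mutually consistent.
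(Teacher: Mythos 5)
Your proposal is correct and follows essentially the same route as the paper, which proves the theorem by combining Theorem \ref{thm:koszul algebra} (formality/Koszulness transferred through the $\Com$--$\Lie$ bar--cobar adjunction) with Theorems \ref{thm:bl} and \ref{thm:nil} to identify $\HH_*(\mathscr{L}(M_X^\vee))$ with $\pi_*(\Omega X)\tensor\QQ$; your explicit identification of the quadratic dual with the orthogonal presentation is exactly the content the paper delegates to Theorem \ref{thm:quadratic}. The only cosmetic difference is that you carry out the sign bookkeeping by hand where the paper cites its general quadratic-presentation result, so no gap remains.
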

Theorem \ref{thm:koszul dual} is proved along with Theorem \ref{thm:koszul space} at the end of Section \ref{sec:rht}. Koszul duality between commutative algebras and Lie algebras is an instance of Koszul duality for algebras over Koszul operads; the commutative operad $\Com$ is a Koszul operad and its Koszul dual operad is the Lie operad $\Lie = \Com^!$ \cite{GK}. If $A$ is a Koszul algebra over a Koszul operad $\Op$ then it has a Koszul dual $A^!$ which is an algebra over the Koszul dual operad $\Op^!$. Any Koszul $\Op$-algebra $A$ admits a quadratic presentation and the Koszul dual $A^!$ can be computed by taking an orthogonal quadratic presentation, see Theorem \ref{thm:quadratic}. Another important Koszul operad is the homology of the little $n$-cubes operad $\G_n = \HH_*(E_n)$ \cite{BV,fcohen,May}. The operad $\G_n$ is Koszul self-dual up to a suspension, $\G_n^! = \Sigma^{n-1} \G_n$ \cite{GJ}, so a Koszul $\G_n$-algebra $A$ has a Koszul dual $\G_n$-algebra $A^{!_{\G_n}}$. The little $n$-cubes operad acts on $n$-fold loop spaces $\Omega^n X$ and so $\G_n$ acts on the homology $\HH_*(\Omega^n X;\QQ)$. Furthermore, any commutative algebra, such as $\HH^*(X;\QQ)$, can be viewed as a $\G_n$-algebra in a natural way. In Section \ref{sec:g} we prove the following result.

\begin{theorem} \label{thm:loop}
If $X$ is an $n$-connected Koszul space, then loop space homology and cohomology are Koszul dual in the sense that there is an isomorphism of $\G_n$-algebras
$$\HH_*(\Omega^n X;\QQ) = \HH^*(X;\QQ)^{!_{\G_n}}.$$
\end{theorem}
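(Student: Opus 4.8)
The plan is to split the assertion into an algebraic computation of the Koszul dual $\G_n$-algebra and a topological identification of $\HH_*(\Omega^n X;\QQ)$, and then to glue the two together with Theorem \ref{thm:koszul dual}. Write $A = \HH^*(X;\QQ)$ and $L = \pi_*(\Omega X)\tensor\QQ$, and regard $A$ as a $\G_n$-algebra with vanishing Browder bracket. Letting $U_{\G_n}$ denote the enveloping $\G_n$-algebra of a graded Lie algebra whose bracket has degree $n-1$, I would prove the two isomorphisms of $\G_n$-algebras
\begin{enumerate}
\item[(A)] $A^{!_{\G_n}} \cong U_{\G_n}\bigl(\Sigma^{1-n}A^{!_{\Lie}}\bigr)$, and
\item[(B)] $\HH_*(\Omega^n X;\QQ) \cong U_{\G_n}\bigl(\Sigma^{1-n}L\bigr)$,
\end{enumerate}
and then combine them using the identification $L = A^{!_{\Lie}}$ from Theorem \ref{thm:koszul dual}, which is available because a Koszul space is coformal.

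For (A) I would work directly with orthogonal quadratic presentations, as permitted by Theorem \ref{thm:quadratic}, together with the self-duality $\G_n^! = \Sigma^{n-1}\G_n$. The binary operations $\G_n(2)$ are spanned by the commutative product and the Browder bracket, and the Koszul self-duality interchanges these two; a quadratic $\G_n$-presentation of $A$ consists of its commutative relations $\sum_{i,j} c_{ij}x_ix_j = 0$ together with the entire space of bracket relations $[x_i,x_j]=0$. Passing to the orthogonal presentation, the product relations of $A$ become the bracket relations of $A^{!_{\G_n}}$, so its Lie part is the free graded Lie algebra modulo the orthogonal relations, which is exactly $\Sigma^{1-n}A^{!_{\Lie}}$; the bracket relations of $A$, spanning the whole bracket summand, become the product relations of $A^{!_{\G_n}}$ and leave the commutative product free. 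Hence $A^{!_{\G_n}}$ is free graded commutative on the Lie algebra $\Sigma^{1-n}A^{!_{\Lie}}$ with the Poisson bracket extending its Lie bracket, which is precisely $U_{\G_n}(\Sigma^{1-n}A^{!_{\Lie}})$. The degree shift $|x_i|\mapsto |x_i|-n$ is consistent with desuspending the generators of $A^{!_{\Lie}}$, which lie in degree $|x_i|-1$, by $n-1$. I expect this step to be essentially formal once the enveloping $\G_n$-algebra functor is in place.

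For (B), the hypothesis that $X$ is $n$-connected makes $\Omega^n X$ a connected $n$-fold loop space, so $\HH_*(\Omega^n X;\QQ)$ is a connected Hopf algebra internal to $\G_n$-algebras. By the appropriate Milnor--Moore structure theorem it is the enveloping $\G_n$-algebra of its graded Lie algebra of primitives, and rationally the Hurewicz map identifies the primitives with $\pi_*(\Omega^n X)\tensor\QQ$. A degree count gives $\pi_*(\Omega^n X)\tensor\QQ \cong \Sigma^{1-n}L$ as graded vector spaces, so it remains to match brackets: on primitives the Browder bracket is, by the classical comparison, the image of the Whitehead product on $\pi_*(X)$, which corresponds under the loop adjunction to the Samelson bracket on $L$, the $(n-1)$-fold desuspension turning the degree-$0$ Samelson bracket into the degree-$(n-1)$ Browder bracket. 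This identifies the primitive Lie algebra with $\Sigma^{1-n}L$ and yields (B).

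The main obstacle is the topological input (B): establishing the $\G_n$-version of the Milnor--Moore theorem and, above all, pinning down the Browder bracket on primitives, since a priori the $\G_n$-structure could involve operations invisible at the level of homotopy. I would make this precise by constructing $\HH_*(\Omega^n X;\QQ)$ from a model of $\Omega^n X$ produced functorially from the Lie model $\lambda(X)\simeq L$ of the coformal space $X$, so that the enveloping $\G_n$-algebra appears on the nose and the Browder bracket is matched with the Samelson bracket at the chain level. By contrast, the algebraic identity (A) and the final assembly should be routine given Theorems \ref{thm:koszul dual} and \ref{thm:quadratic}.
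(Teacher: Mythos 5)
Your overall architecture coincides with the paper's: your step (A) is the paper's Proposition \ref{prop:comgn}, your step (B) is Proposition \ref{prop:mm}, and the final assembly via Theorem \ref{thm:koszul dual} is exactly how the paper concludes. But your proof of (A) has a genuine gap: Theorem \ref{thm:quadratic} takes as \emph{hypothesis} that $A$ is a Koszul $\Op$-algebra, and the Koszul dual $A^{!_{\G_n}}$ is only defined once $A$ admits a Koszul weight grading as a $\G_n$-algebra. The Koszul space hypothesis gives Koszulness of $A = \HH^*(X;\QQ)$ only as a \emph{commutative} algebra, and it is not automatic that a commutative Koszul algebra remains Koszul when regarded as a $\G_n$-algebra with zero bracket: taking orthogonal presentations produces a quadratic dual of any quadratic algebra, but for a non-Koszul quadratic algebra the quadratic dual does not compute the homology of the bar construction, and then nothing ties your right-hand side to $\HH_*(\Omega^n X;\QQ)$. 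The paper closes precisely this gap using the decomposition $\G_n = \Com\circ\Lie_{n-1}$, where $\Lie_{n-1}=\Sigma^{n-1}\Lie$: since the bracket on $A$ is trivial, $B_{\G_n}(A)\cong \Lie_{n-1}^\as[B_{\Com}(A)]$, so the $\G_n$-bar construction is formal if and only if the commutative bar construction is, and by Corollary \ref{cor:formal} this shows $A$ is Koszul over $\G_n$ if and only if it is Koszul over $\Com$; the same decomposition then yields $A^{!_{\G_n}}\cong\Lambda(s^{1-n}A^{!_{\Lie}})$. Note that your own passage from ``free $\G_n$-algebra modulo bracket relations only'' to the enveloping construction $\Lambda$ of the quotient Lie algebra already uses $\G_n=\Com\circ\Lie_{n-1}$ implicitly, so you should make that decomposition the basis of (A) and derive both the Koszulness and the formula from it, rather than routing through Theorem \ref{thm:quadratic}.

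For (B) you are overcautious: no $\G_n$-internal Milnor--Moore theorem and no chain-level $\G_n$-model of $\Omega^n X$ are needed, and the paper uses neither. The classical Milnor--Moore theorem identifies $\HH_*(\Omega^n X;\QQ)$ with the universal enveloping algebra of $\pi_*(\Omega^n X)\tensor\QQ$ under the Samelson product; since $\Omega^{n-1}X$ is an H-space for $n\geq 2$, its Whitehead products vanish, hence the Samelson bracket on $\pi_*(\Omega^n X)$ is zero and the underlying algebra is the free graded commutative algebra $\Lambda(s^{-n}\pi_*(X)\tensor\QQ)$. The Browder bracket is then pinned down by Cohen's theorem that the Hurewicz homomorphism carries Whitehead products to Browder brackets. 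Your worry about ``operations invisible at the level of homotopy'' is unfounded rationally: the operad $\G_n$ is generated by its two binary operations, so a map of graded vector spaces respecting the product and the bracket is automatically a morphism of $\G_n$-algebras. With (A) repaired as above and (B) so simplified, your assembly via Theorem \ref{thm:koszul dual} is exactly the paper's proof.
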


Before getting into the proofs of Theorems 2, 3 and 4, we list some simple but interesting consequences of these results. First off, Theorem \ref{thm:koszul space} settles in the positive a question of Papadima and Suciu \cite[Question 8.2]{PS}.
\begin{corollary} \label{cor:arrangements}
Let $\mathscr{A}$ be an arrangement of hyperplanes in $\CC^\ell$ and let $X=M(\mathscr{A})$ denote its complement. Let $\mathscr{A}^{k+1}$ denote the corresponding redundant subspace arrangement in $\CC^{(k+1)\ell}$ and let $Y=M(\mathscr{A}^{k+1})$. Then $Y$ is coformal if and only if $\HH^*(X;\QQ)$ is a Koszul algebra.
\end{corollary}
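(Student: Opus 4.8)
The plan is to reduce the statement to Theorem \ref{thm:koszul space} applied to the space $Y$, feeding in two geometric facts about the redundant subspace arrangement that I would take from Papadima and Suciu \cite{PS}: that $Y$ is formal, and that its rational cohomology is a degree-rescaling of the Orlik--Solomon algebra $A = \HH^*(X;\QQ)$. Recall that each hyperplane $H_i = \ker\alpha_i$ in $\CC^\ell$ is replaced inside $\CC^{(k+1)\ell} = (\CC^\ell)^{k+1}$ by the codimension-$(k+1)$ subspace cut out by the equations $\alpha_i(z_1) = \cdots = \alpha_i(z_{k+1}) = 0$. Since the complement of a single complex subspace of codimension $k+1$ is homotopy equivalent to $S^{2k+1}$, the defining cohomology classes of $Y$ live in degree $2k+1$, and the result I would quote from \cite{PS} is that $\HH^*(Y;\QQ)$ is the $(2k+1)$-rescaling of $A$: writing $A = \bigoplus_i A^i$ with generators in degree $1$, one has $\HH^{(2k+1)i}(Y;\QQ) \cong A^i$ while all other cohomology vanishes, with the algebra structure induced from that of $A$. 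For $k\ge 1$ this makes $Y$ a $2k$-connected, hence simply connected, space of finite $\QQ$-type, so that Theorem \ref{thm:koszul space} is applicable.

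First I would observe that the rescaling preserves Koszulness. I would equip $A$ with its weight grading, in which the degree-$1$ generators have weight $1$; the very same weight grading is carried by $\HH^*(Y;\QQ)$, because passing to the rescaling multiplies only the internal cohomological degree by $2k+1$ and leaves the weight untouched. As $2k+1$ is odd, the generators of $\HH^*(Y;\QQ)$ lie in odd degree exactly as those of $A$ do, so the two are isomorphic as weight-graded graded-commutative algebras and have the same quadratic relations. Since Koszulness is the condition $\Tor_{i,j} = 0$ for $i \ne j$, where $j$ records the weight, and this grading coincides for $A$ and $\HH^*(Y;\QQ)$, I would conclude that $\HH^*(Y;\QQ)$ is a Koszul algebra if and only if $A = \HH^*(X;\QQ)$ is.

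The second step is to apply Theorem \ref{thm:koszul space} to $Y$. Since $Y$ is formal, the equivalence of conditions \ref{ks1} and \ref{ks2} shows that $Y$ is coformal if and only if $\HH^*(Y;\QQ)$ is Koszul; combining this with the first step gives that $Y$ is coformal if and only if $\HH^*(X;\QQ)$ is a Koszul algebra, which is the assertion.

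The main obstacle, and the only part lying outside the scope of Theorem \ref{thm:koszul space}, will be securing the two inputs from \cite{PS}: the formality of $Y$ and the identification of $\HH^*(Y;\QQ)$ with the rescaled Orlik--Solomon algebra. Formality of subspace-arrangement complements is not automatic, so this is a genuine geometric hypothesis rather than a formal consequence. Once these inputs are in hand the rest of the argument is routine: the invariance of Koszulness under odd rescaling is mere bookkeeping of the weight grading, and the passage from ``formal and coformal'' to ``coformal'' is immediate because $Y$ is already known to be formal.
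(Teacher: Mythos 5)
Your proposal is correct and follows essentially the same route as the paper: quote formality of $Y$ and the rescaling formula $\HH^*(Y;\QQ)=\HH^*(X;\QQ)[k]$ from \cite{PS}, observe that the odd rescaling leaves the weight-graded $\Tor$ condition unchanged so Koszulness transfers, and then apply the equivalence \eqref{ks1}$\Leftrightarrow$\eqref{ks2} of Theorem \ref{thm:koszul space} to $Y$. Your added care about $k\ge 1$ guaranteeing simple connectivity (hence nilpotence, so the theorem applies) is a detail the paper leaves implicit, but it is the same argument.
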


\begin{proof}
Both $X$ and $Y$ are formal, see \cite[\S 1.5]{PS}, so if $Y$ is also coformal then it follows from Theorem \ref{thm:koszul space} that $\HH^*(Y;\QQ)$ is a Koszul algebra. From the rescaling formula
$$\HH^*(Y;\QQ) = \HH^*(X;\QQ)[k],$$
see \cite[\S 1.3]{PS}, it easily follows that $\HH^*(X;\QQ)$ is Koszul as well.
\end{proof}

In 1965, Serre \cite[IV-52]{Serre} asked whether the Poincar\'e series
$$\sum_{i\geq 0} \dim_\QQ \HH_i(\Omega X;\QQ) z^i$$
of a simply connected finite CW-complex $X$ is always a rational function. This question remained open until a counterexample was constructed by Anick in 1982 \cite{Anick1,Anick2}. It is interesting to note that such counterexamples cannot be Koszul spaces.

\begin{corollary}
If $X$ is a simply connected Koszul space with finitely generated rational cohomology algebra then $\HH_*(\Omega X;\QQ)$ has rational Poincar\'e series.
\end{corollary}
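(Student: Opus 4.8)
The plan is to reduce the topological statement to a purely algebraic computation of Hilbert series for Koszul algebras. Write $A = \HH^*(X;\QQ)$. Since $X$ is a Koszul space it is formal and, by Theorem \ref{thm:koszul space}, $A$ is a graded commutative Koszul algebra; by hypothesis it is finitely generated. As $X$ is simply connected it is in particular $1$-connected, so Theorem \ref{thm:loop} applies with $n=1$. Because $\G_1 = \HH_*(E_1)$ is the associative operad, the Koszul dual $A^{!_{\G_1}}$ is the associative Koszul dual of $A$, whose bigraded dimensions agree with those of $\Tor^A(\QQ,\QQ)$. Thus it suffices to show that the Poincar\'e series of this bigraded vector space, suitably specialized, is a rational function.

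I would work with two gradings on $A$: the internal (cohomological) degree $d$, tracked by $z$, and the weight $w$ of the Koszul structure, tracked by $y$, so that each generator $x_i$ has bidegree $(|x_i|,1)$. Set $H_A(z,y) = \sum_{d,w} \dim_\QQ A_{d,w}\, z^d y^w$ and $H_{A^{!}}(z,y) = \sum_{s,d} \dim_\QQ \Tor^A_{s,d}(\QQ,\QQ)\, z^d y^s$, where $s$ denotes homological degree. The key step is the Hilbert series form of Koszul duality. Koszulness means that the minimal free resolution of $\QQ$ over $A$ is the Koszul complex $A \tensor \Tor^A(\QQ,\QQ)$, in which $\Tor^A_s(\QQ,\QQ)$ is concentrated in weight $s$. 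Taking the bigraded Euler characteristic of this resolution—for each fixed internal degree and weight, exactness gives an alternating sum of dimensions with homological sign $(-1)^s$—yields Fr\"oberg's relation
$$ H_A(z,y)\, H_{A^{!}}(z,-y) = 1. $$

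Finally, I would invoke finiteness and specialize. Since $A$ is a finitely generated graded commutative algebra with generators of bidegree $(|x_i|,1)$, the Hilbert--Serre theorem gives that $H_A(z,y)$ is a rational function of $z$ and $y$; hence so is $H_{A^{!}}(z,y) = 1/H_A(z,-y)$. Under the identification of Theorem \ref{thm:loop}, a class of homological degree $s$ and internal degree $d$ corresponds to a class of topological degree $d-s$ in $\HH_*(\Omega X;\QQ)$ (the generators dual to the $x_i$ sit in degree $|x_i|-1$, as in Theorem \ref{thm:koszul dual}). Therefore the loop space Poincar\'e series is the specialization
$$ \sum_{i\ge 0} \dim_\QQ \HH_i(\Omega X;\QQ)\, z^i = H_{A^{!}}(z,z^{-1}) = \frac{1}{H_A(z,-z^{-1})}, $$
which is a rational function of $z$. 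Since $X$ is simply connected of finite $\QQ$-type each $\HH_i(\Omega X;\QQ)$ is finite dimensional, so the series is well defined, completing the proof.

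The main obstacle I anticipate is the bookkeeping in the Hilbert series relation: one must verify that in the graded commutative setting the only sign entering the Euler characteristic is the homological sign $(-1)^s$, so that the weight variable is negated while the internal variable is not, and that the weight grading on $\Tor^A_s(\QQ,\QQ)$ is pure of weight $s$, which is exactly the content of Koszulness. Granting these points, the specialization $y = z^{-1}$ transparently converts the algebraic identity into the desired rationality statement.
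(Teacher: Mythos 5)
Your proof is correct and follows essentially the same route as the paper: both apply Theorem \ref{thm:loop} with $n=1$ to identify $\HH_*(\Omega X;\QQ)$ with the associative Koszul dual of $A=\HH^*(X;\QQ)$, use the bigraded Koszul--Fr\"oberg relation (your $H_{A^!}(z,y)=H_A(z,-y)^{-1}$ is the paper's $A^!(t,z)=A(-tz^{-1},z)^{-1}$, whose substitution absorbs your degree shift $d-s$), and conclude via Hilbert--Serre rationality for finitely generated graded commutative algebras. The only difference is presentational: you derive the Euler-characteristic identity from the minimal resolution and track the weight purity and degree bookkeeping explicitly, where the paper cites these facts as well known.
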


\begin{proof}
For a bigraded algebra $A$ let
$$A(t,z) = \sum_{i,j} \dim_\QQ A_{i}(j) z^i t^j$$
where $t$ keeps track of the weight grading and $z$ keeps track of the homological grading. The bigraded Poincar\'e series of a Koszul algebra $A$ and its Koszul dual associative algebra $A^!$ are rationally related by the following formula:
$$A^!(t,z) = A(-tz^{-1},z)^{-1}.$$
In particular, $A^!(1,z)$ is rational if $A(t,z)$ is rational. It is well known that any finitely generated graded commutative algebra has rational Poincar\'e series, so the claim follows from Theorem \ref{thm:loop} with $n=1$; $\HH_*(\Omega X;\QQ) = ·\HH^*(X;\QQ)^!$.
\end{proof}

\begin{corollary}
If $X$ is a Koszul space with finitely generated cohomology algebra, then the rational homotopy Lie algebra $\pi_*(\Omega X)\tensor \QQ$ is finitely generated.
\end{corollary}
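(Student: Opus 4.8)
The plan is to read this off directly from the Koszul duality established in Theorem \ref{thm:koszul dual}. Set $A = \HH^*(X;\QQ)$. Since $X$ is a Koszul space, condition \eqref{ks2} of Theorem \ref{thm:koszul space} tells us that $A$ is a graded commutative Koszul algebra, and Theorem \ref{thm:koszul dual} identifies the rational homotopy Lie algebra with its Koszul dual,
$$\pi_*(\Omega X)\tensor \QQ = A^{!_{\Lie}}.$$
It therefore suffices to show that $A^{!_{\Lie}}$ is finitely generated as a graded Lie algebra.

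First I would observe that finite generation of $A$ forces the generating set of its quadratic presentation to be finite. Being Koszul, $A$ is quadratic, so its minimal generators all lie in weight one; for a connected graded algebra these generators form a basis of the indecomposables $\bar A/\bar A^2$, which here is precisely the weight-one part $A(1)$. Hence finite generation of $A$ as an algebra is equivalent to $\dim_\QQ A(1) < \infty$, so the generators $x_i$ are finite in number.

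By definition the Koszul dual Lie algebra $A^{!_{\Lie}}$ is a quotient of the free graded Lie algebra on the dual generators $\alpha_i$, one for each $x_i$, modulo the orthogonal relations. As there are only finitely many $\alpha_i$, this exhibits $A^{!_{\Lie}}$ as a homomorphic image of a finitely generated free graded Lie algebra, and it is therefore finitely generated. The only point needing a moment's thought is the reduction from finite generation of $A$ to finiteness of the set $\{x_i\}$; the rest is immediate from the definition of the Koszul dual. In particular we need say nothing about the orthogonal relations, since finite generation of a Lie algebra constrains only the generators and not the relations.
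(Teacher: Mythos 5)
Your proposal is correct and follows the same route as the paper: the paper's proof is the one-line observation that the Koszul dual Lie algebra of a finitely generated commutative Koszul algebra is finitely generated, combined with Theorem \ref{thm:koszul dual}. You have simply filled in the details behind that observation (quadraticity via Theorem \ref{thm:quadratic}, the identification of the weight-one part $A(1)$ with the indecomposables, and the resulting presentation of $A^{!_{\Lie}}$ as a quotient of a free graded Lie algebra on finitely many generators), all of which is accurate.
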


\begin{proof}
The Koszul dual Lie algebra of a finitely generated commutative Koszul algebra is finitely generated, so the claim follows from Theorem \ref{thm:koszul dual}.
\end{proof}

Our results also shed new light on existing results. The following corollary recovers a well known result due to F.Cohen \cite{fcohen}, but our short proof using Koszul duality is new. We like to think of Theorem \ref{thm:loop} as a generalization of this result.

\begin{corollary}
For any space $Y$ there is an isomorphism of $\G_n$-algebras
$$\HH_*(\Omega^n \Sigma^n Y;\QQ) \cong \G_n[\rH_*(Y;\QQ)]$$
where the right hand side denotes the free $\G_n$-algebra on the reduced homology of $Y$.
\end{corollary}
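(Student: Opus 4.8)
The plan is to exhibit $\Sigma^n Y$ as an $n$-connected Koszul space whose cohomology is a trivial (square-zero) algebra, apply Theorem \ref{thm:loop}, and then identify the $\G_n$-Koszul dual of a trivial algebra as a free $\G_n$-algebra. I may assume that $Y$ is connected, so that $X = \Sigma^n Y$ is $n$-connected and Theorem \ref{thm:loop} is applicable. First I would observe that the reduced cohomology $\rH^*(\Sigma^n Y;\QQ)$, which is the $n$-fold suspension of $\rH^*(Y;\QQ)$, carries the trivial product: all products of positive-degree classes on a suspension vanish. Thus $\HH^*(\Sigma^n Y;\QQ)$ is the trivial algebra $\QQ \oplus \rH^*(\Sigma^n Y;\QQ)$. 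A trivial algebra admits the quadratic presentation in which the space of relations is all of the quadratic component, and it is a Koszul algebra (its quadratic dual is a free $\Lie$-algebra). Since suspensions are formal, Theorem \ref{thm:koszul space} then shows that $\Sigma^n Y$ is a Koszul space.

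Because $\Sigma^n Y$ is an $n$-connected Koszul space, Theorem \ref{thm:loop} applies and yields an isomorphism of $\G_n$-algebras
$$\HH_*(\Omega^n\Sigma^n Y;\QQ) \cong \HH^*(\Sigma^n Y;\QQ)^{!_{\G_n}}.$$
It therefore remains only to compute the $\G_n$-Koszul dual of a trivial algebra. To do this I would invoke Theorem \ref{thm:quadratic}: the Koszul dual is computed from an orthogonal quadratic presentation. For the trivial algebra the space of relations is the \emph{entire} quadratic component, so its orthogonal complement is zero and the resulting $\Op^!$-presentation has no relations; hence the Koszul dual is the free $\G_n$-algebra on the generators.

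It then remains to match degrees. Under $\G_n$-Koszul duality (recall $\G_n^! = \Sigma^{n-1}\G_n$) the net effect, combining the suspension of the operad with the desuspension of generators intrinsic to the Koszul dual, is that a generator in cohomological degree $d$ corresponds to a free generator in homological degree $d-n$. Since the generators of $\HH^*(\Sigma^n Y;\QQ)$ are the $n$-fold suspensions of classes in $\rH^{d-n}(Y;\QQ)$, and rational homology and cohomology are dual, the free generators are precisely $\rH_*(Y;\QQ)$, giving
$$\HH_*(\Omega^n\Sigma^n Y;\QQ) \cong \G_n[\rH_*(Y;\QQ)].$$

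The main obstacle is precisely this last degree bookkeeping: one must correctly account for the suspension in $\G_n^! = \Sigma^{n-1}\G_n$ together with the desuspension of generators built into the Koszul dual construction, and verify that the composite shift sends cohomological degree $d$ to homological degree $d-n$, so that the free generators land in $\rH_*(Y;\QQ)$ rather than in some reindexing of it. As a sanity check, the case $n=1$ should recover the Bott--Samelson isomorphism $\HH_*(\Omega\Sigma Y;\QQ) \cong T(\rH_*(Y;\QQ))$ with the free associative algebra, where $\G_1$ is the associative operad. A secondary point to confirm is the reduction to connected $Y$ needed to guarantee the $n$-connectivity of $\Sigma^n Y$ before Theorem \ref{thm:loop} can be applied.
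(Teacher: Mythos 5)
Your proposal is correct and takes essentially the same route as the paper: its proof likewise observes that $\Sigma^n Y$ is an $n$-connected Koszul space whose cohomology is the trivial algebra on $\rH{}^*(\Sigma^n Y;\QQ)\cong s^{-n}\rH{}^*(Y;\QQ)$ and that the Koszul dual of a trivial $\G_n$-algebra on $V$ is the free $\G_n$-algebra $\G_n[(s^n V)^\vee]$, which your degree bookkeeping (cohomological degree $d$ mapping to homological degree $d-n$) reproduces, with your appeal to Theorem \ref{thm:quadratic} merely making explicit what the paper asserts. The connectedness of $Y$ that you flag is implicitly assumed in the paper as well (for disconnected $Y$ the statement requires group completion, as $\HH_*(\Omega\Sigma S^0;\QQ)=\QQ[x,x^{-1}]$ shows), so your caveat is, if anything, more careful than the original.
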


\begin{proof}
The $n$-fold suspension $\Sigma^n Y$ of any space $Y$ is an $n$-connected Koszul space whose cohomology $\HH^*(\Sigma^n Y;\QQ)$ is a trivial algebra `generated' by $\rH{}^*(\Sigma^n Y;\QQ) \cong s^{-n}\rH{}^*(Y;\QQ)$. The Koszul dual $\G_n$-algebra of a trivial $\G_n$-algebra generated by a graded vector space $V$ is the free $\G_n$-algebra $\G_n[(s^n V)^\vee]$.
\end{proof}
In view of Cohen's calculation of $\HH_*(\Omega^n\Sigma^n Y;\mathbb{F}_p)$ \cite{fcohen}, it would be interesting to see to what extent Theorem \ref{thm:loop} extends to finite coefficients where one would also need to take into consideration Araki-Kudo-Dyer-Lashof and Steenrod operations.

Relations between the Koszul property, formality and coformality have been studied in \cite{PS} and \cite{PY}, but with a more restrictive definition of Koszul algebras; their definition require Koszul algebras to be generated in cohomological degree $1$. In \cite[Example 4.10]{PS} the authors give an example of a space which is formal and coformal, but where the authors say the cohomology algebra is not Koszul, namely $S^1\vee S^2$. The cohomology algebra is not Koszul according to the more restrictive definition, simply because it is not generated in cohomological degree $1$, but it is Koszul according to our definition. Theorem \ref{thm:koszul space} and Theorem \ref{thm:koszul dual} together imply the main result of \cite{PY} in the nilpotent case.

\begin{corollary}
Let $X$ be a formal nilpotent space. Then $X$ is a rational $K(\pi,1)$-space if and only if $\HH^*(X;\QQ)$ is Koszul and generated in cohomological degree $1$.
\end{corollary}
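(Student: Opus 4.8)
The plan is to deduce both implications from Theorems \ref{thm:koszul space} and \ref{thm:koszul dual}, the only extra ingredient being that a rational $K(\pi,1)$ is automatically coformal. Throughout I write $L = \pi_*(\Omega X)\tensor\QQ$, graded so that $L_n = \pi_{n+1}(X)\tensor\QQ$; then $X$ is a rational $K(\pi,1)$ exactly when $L$ is concentrated in degree $0$.

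First I would establish that a rational $K(\pi,1)$ is coformal. Transferring the differential graded Lie algebra structure of $\lambda(X)$ onto its homology $L$ produces a minimal $L_\infty$-model whose operations $\ell_n\colon L^{\tensor n}\to L$ have degree $n-2$ (in the homological convention in which $\ell_1$ is the differential and $\ell_2$ the bracket). If $L$ is concentrated in degree $0$, then $\ell_n$ takes values in degree $n-2$ and hence vanishes for every $n\neq 2$; the transferred structure is therefore a strict graded Lie algebra, so $\lambda(X)$ is quasi-isomorphic to $L$ and $X$ is coformal. (Equivalently: a homotopy Lie algebra concentrated in a single degree carries no nontrivial higher brackets for degree reasons.)

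For the forward implication, assume $X$ is formal and a rational $K(\pi,1)$. By the preceding step $X$ is also coformal, so Theorem \ref{thm:koszul space} shows that $\HH^*(X;\QQ)$ is Koszul, and Theorem \ref{thm:koszul dual} gives $L\cong\HH^*(X;\QQ)^{!_{\Lie}}$, the free graded Lie algebra on generators $\alpha_i$ in degree $|x_i|-1$ modulo the orthogonal quadratic relations. Since these relations have weight two, they leave the weight-one part $\mathrm{span}(\alpha_i)$ untouched, so each $\alpha_i$ is a nonzero element of $L$ in degree $|x_i|-1$; as $L$ sits in degree $0$, this forces $|x_i|=1$, i.e. $\HH^*(X;\QQ)$ is generated in cohomological degree $1$. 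Conversely, if $X$ is formal with $\HH^*(X;\QQ)$ Koszul and generated in cohomological degree $1$, then Theorem \ref{thm:koszul space} makes $X$ coformal and Theorem \ref{thm:koszul dual} again identifies $L$ with the Koszul dual Lie algebra, now free on generators $\alpha_i$ in degree $0$; every iterated bracket of degree-$0$ classes stays in degree $0$, so $L$ is concentrated in degree $0$ and $X$ is a rational $K(\pi,1)$.

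The hard part is really the single point that a rational $K(\pi,1)$ is coformal, which is where the degree count for the higher $L_\infty$-operations does the work; once coformality is in hand, the rest is bookkeeping with the homological and weight gradings, hinging on the elementary fact that in a quadratic presentation the generators are never killed by the (weight-two) relations, so that $|x_i|=1 \Leftrightarrow |\alpha_i|=0$ in both directions.
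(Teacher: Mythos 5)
Your proof is correct, and its overall skeleton --- reduce both implications to Theorems \ref{thm:koszul space} and \ref{thm:koszul dual} once one knows that a rational $K(\pi,1)$ is coformal --- is the same as the paper's. The genuine divergence is in how that coformality lemma is proved. You argue via homotopy transfer: the minimal $L_\infty$-structure on $L=\pi_*(\Omega X)\tensor\QQ$ has operations $\ell_n$ of degree $n-2$, which vanish for $n\neq 2$ when $L$ is concentrated in degree $0$; note that passing from the resulting $\infty$-quasi-isomorphism to an actual zig-zag of strict quasi-isomorphisms of dg Lie algebras implicitly invokes rectification (\cite[Theorem 11.4.14]{LV}, which the paper cites elsewhere), so the step is legitimate but machinery-heavy. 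The paper instead argues elementarily: for any dg Lie algebra $L$ concentrated in non-negative homological degrees, the truncation map $L\rightarrow \HH_0(L)$ (projection in degree $0$, zero above) is a strict morphism of dg Lie algebras and is a quasi-isomorphism exactly when $\HH_n(L)=0$ for $n>0$; no transfer theorem is needed. In exchange, your write-up is more complete on another point: your observation that the orthogonal relations live in weight $2$ and therefore cannot kill the weight-one generators, so that $|x_i|=1\Leftrightarrow|\alpha_i|=0$, supplies the ``generated in cohomological degree $1$'' half of the forward implication, which the paper's proof leaves implicit (it only records that the cohomology is Koszul). Both arguments are sound; the paper's is shorter and more elementary on the coformality step, while yours makes the degree bookkeeping in the quadratic presentation fully explicit.
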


\begin{proof}
If $X$ is formal and $\HH^*(X;\QQ)$ is Koszul, then $X$ is a Koszul space and so by Theorem \ref{thm:koszul dual} we have that $\pi_*(\Omega X)\tensor \QQ \cong \HH^*(X;\QQ)^{!_{\Lie}}$. If $\HH^*(X;\QQ)$ is generated in cohomological degree $1$ then $\HH^*(X;\QQ)^{!_{\Lie}}$ is generated in homological degree $0$, which forces it to be concentrated in degree $0$. Hence $\pi_n(X)\tensor \QQ = 0$ for $n>1$.

Conversely, for any differential graded Lie algebra $L$ concentrated in non-negative homological degrees, there is a morphism $L\rightarrow \HH_0(L)$ which is a quasi-isomorphism if and only if $\HH_n(L) = 0$ for $n>0$. In particular, $L$ is coformal if it has homology concentrated in degree $0$. It follows that every rational $K(\pi,1)$-space is coformal. Thus, if $X$ is simultaneously formal and a rational $K(\pi,1)$, it is both formal and coformal and hence the cohomology is Koszul by Theorem \ref{thm:koszul space}.
\end{proof}

At the expense of getting less transparent statements, it should be possible to extend our main results to the not necessarily nilpotent case so as to include \cite[Theorem 5.1]{PY} as a corollary. The main difficulty is the lack of a Lie model $\lambda(X)$ for non-nilpotent spaces $X$. Also, one would have to invent an appropriate notion of coformality --- this notion does not seem to have been considered yet for non-nilpotent spaces.

\subsection*{Acknowledgments}
The author is grateful to Alexandru Suciu for directing his attention to the paper \cite{PY}. We also thank Bruno Vallette and Joan Mill\`es for discussions about Koszul duality for algebras over operads.

\section{Koszul duality for algebras over operads} \label{sec:koszul}
Koszul algebras were introduced by Priddy \cite{Priddy}. Koszul duality for operads was developed by Ginzburg-Kapranov \cite{GK}, Getzler-Jones \cite{GJ}. A modern comprehensive introduction to Koszul duality theory for operads can be found in the recent book \cite{LV} by Loday and Vallette. See also Fresse \cite{Fresse}. Koszul duality for algebras over Koszul operads was introduced in \cite{GK} and has recently been developed further by Mill\`es \cite{Milles}. In what follows, we will review the parts of the theory needed for proving the main results of this section --- Theorem \ref{thm:koszul}, Theorem \ref{thm:koszul algebra} and Corollary \ref{cor:formal} --- and we will freely use terminology from \cite{LV} without further reference.

\subsection{Twisting morphisms, bar and cobar constructions}
Let $\Coop$ be a coaugmented cooperad and $\Op$ an augmented operad in chain complexes and let $\tau\colon \Coop\rightarrow \Op$ be a twisting morphism. Let $C$ be a $\Coop$-coalgebra and let $A$ be an $\Op$-algebra. A \emph{twisting morphism relative to $\tau$} is a map $\kappa\colon C\rightarrow A$ of degree $0$ such that
$$\partial(\kappa) + \tau * \kappa = 0$$
where $\tau * \kappa$ is the composite map
$$\xymatrix{C\ar[r]^-\Delta & \Coop[C] \ar[r]^-{\tau[\kappa]} & \Op[A] \ar[r]^-\mu & A.}$$
Let $\Tw_\tau(C;A)$ denote the set of twisting morphisms relative to $\tau$. It is a bifunctor $\Coop_{coalg}^{op}\times \Op_{alg} \rightarrow Set$. For a fixed $\Op$-algebra $A$, the functor $\Tw_\tau(-;A)$ is represented by the \emph{bar construction} $B_\tau A$ and for a fixed $\Coop$-coalgebra $C$, the functor $\Tw_\tau(C;-)$ is represented by the \emph{cobar construction} $\Omega_\tau C$; there are natural bijections
\begin{equation} \label{bij}
\begin{array}{ccccc}
\Hom_\Op(\Omega_\tau C,A) & \cong & \Tw_\tau(C;A) & \cong & \Hom_\Coop(C,B_\tau A) \\
\psi_\kappa & & \kappa & & \phi_\kappa
\end{array}
\end{equation}
In particular, the bar and cobar constructions form an adjoint pair of functors
$$\adjunction{\Coop_{coalg}}{\Op_{alg}}{\Omega_\tau}{B_\tau}.$$
The bar construction is defined as the $\Coop$-coalgebra $B_\tau A = (\Coop[A],d+b)$ where $d$ is the internal differential of the chain complex $\Coop[A]$ and $b$ is the unique $\Coop$-coderivation making the diagram
$$\xymatrix{\Coop[A] \ar[r]^b \ar[d]^-{\tau[A]} & \Coop[A] \ar[d]^-{\epsilon[A]} \\ \Op[A] \ar[r]^-\mu & A}$$
commute. The fact that $(d+b)^2= 0$ is a consequence of the Maurer-Cartan equation for $\tau$.

The cobar construction is defined as the $\Op$-algebra $\Omega_\tau C = (\Op[C],d+\delta)$, where $d$ is the internal differential of $\Op[C]$ and $\delta$ is the unique $\Op$-derivation making the diagram
$$\xymatrix{C \ar[d]^-{\eta[C]} \ar[r]^-{\Delta} & \Coop[C] \ar[d]^-{\tau[C]} \\ \Op[C] \ar[r]^-\delta & \Op[C]}$$
commute. The fact that $(d+\delta)^2 = 0$ is a consequence of the Maurer-Cartan equation for $\tau$.

The bijection \eqref{bij} can be made explicit as follows. The unit and counit maps $\eta[C]\colon C\rightarrow \Op[C]$ and $\epsilon[A]\colon \Coop[A]\rightarrow A$ define twisting morphisms $\iota\colon C\rightarrow \Omega_\tau C$ and $\pi\colon B_\tau A \rightarrow A$ called the \emph{universal twisting morphisms}. Given a morphism of $\Coop$-coalgebras $\phi\colon C\rightarrow B_\tau A$ we get a twisting morphism $\kappa = \pi^*(\phi)\colon C\rightarrow A$. Similarly, given a morphism of $\Op$-algebras $\psi\colon \Omega_\tau C\rightarrow A$, we get a twisting morphism $\kappa = \iota_*(\phi)\colon C\rightarrow A$.
\begin{equation} \label{diag:universal}
\xymatrix{& B_\tau A \ar[dr]^-\pi \\ C \ar[rr]^-\kappa \ar[ur]^-{\phi_\kappa} \ar[dr]_-\iota && A  \\ & \Omega_\tau C \ar[ur]_-{\psi_\kappa}}
\end{equation}
Conversely, given a twisting morphism $\kappa\colon C\rightarrow A$, there is an associated morphism of $\Coop$-algebras $\phi_\kappa\colon C\rightarrow B_\tau A$ given by the composite map
$$\xymatrix{C\ar[r]^-\Delta & \Coop[C] \ar[r]^-{\Coop[\kappa]} & \Coop[A]}$$
and there is an associated morphism of $\Op$-algebras $\psi_\kappa\colon \Omega_\tau C\rightarrow A$ given by the composite map
$$\xymatrix{\Op[C] \ar[r]^-{\Op[\kappa]} & \Op[A] \ar[r]^-\mu & A.}$$
The unit and counit of the adjunction give natural morphisms of $\Op$-algebras and $\Coop$-coalgebras
$$\Omega_\tau B_\tau A \rightarrow A, \quad C\rightarrow  B_\tau \Omega_\tau C.$$
The twisting morphism $\tau\colon\Coop\rightarrow \Op$ is a \emph{Koszul twisting morphism} if these are quasi-isomorphisms for all $C$ and $A$.
\begin{definition}
A twisting morphism $\kappa\colon C\rightarrow A$ relative to $\tau$ is a \emph{Koszul twisting morphism} if the associated morphisms $\phi_\kappa\colon C\rightarrow B_\tau A$ and $\psi_\kappa \colon \Omega_\tau C \rightarrow A$ are quasi-isomorphisms. We will denote the set of Koszul twisting morphisms relative to $\tau$ from $C$ to $A$ by $\Kos_\tau(C;A)$.
\end{definition}
In particular, $\tau\colon \Coop\rightarrow \Op$ is a Koszul twisting morphism if and only if the universal twisting morphisms $\pi\colon B_\tau A\rightarrow A$ and $\iota\colon C\rightarrow \Omega_\tau C$ are Koszul for all $C$ and $A$.

\begin{definition}
\begin{itemize}
\item A morphism of $\Op$-algebras $\psi\colon A\rightarrow A'$ is called a \emph{weak equivalence} if it is a quasi-isomorphism.

\item A morphism of $\Coop$-coalgebras $\phi\colon C\rightarrow C'$ is called a \emph{weak equivalence} if the associated morphism of $\Op$-algebras $\Omega_\tau \phi\colon \Omega_\tau C\rightarrow \Omega_\tau C'$ is a quasi-isomorphism.
\end{itemize}
\end{definition}
Every weak equivalence of $\Coop$-coalgebras is a quasi-isomorphism but not conversely. A counterexample is given by the map of associative dg-coalgebras  $Bk[x]\rightarrow Bk[x,x^{-1}]$ induced by the algebra inclusion $k[x]\subseteq k[x,x^{-1}]$. The bar construction $B_\tau$ preserves but does not reflect quasi-isomorphisms. The cobar construction $\Omega_\tau$ reflects but does not preserve quasi-isomorphisms. However, the definition of weak equivalence is rigged so that the bar and cobar constructions both preserve and reflect weak equivalences. There are model category structures on $\Op$-algebras and $\Coop$-coalgebras with these weak equivalences, see \cite{Vallette}.

\begin{definition}
\begin{enumerate}
\item We say that two $\Op$-algebras $A$ and $A'$ are \emph{weakly equivalent} and write $A\sim A'$ if there is a zig-zag of weak equivalences of $\Op$-algebras
$$\xymatrix{A & \ar[l]_-\sim A_1 \ar[r]^-\sim & A_2 & \ar[l]_-\sim \ldots \ar[r]^-\sim & A_n & \ar[l]_-\sim A'}$$
We say that two $\Coop$-coalgebras $C$ and $C'$ are \emph{weakly equivalent} and write $C\sim C'$ if they can be connected through a zig-zag of weak equivalences of $\Coop$-coalgebras.

\item An $\Op$-algebra $A$ is called \emph{formal} if it is weakly equivalent to its homology, $A\sim \HH_*(A)$. A $\Coop$-coalgebra $C$ is called \emph{formal} if it is weakly equivalent to its homology $C\sim H_*(C)$.
\end{enumerate}
\end{definition}

\begin{proposition}
Let $\kappa\colon C\rightarrow A$ be a twisting morphism relative to a Koszul twisting morphism $\tau\colon \Coop\rightarrow \Op$. The following are equivalent
\begin{enumerate}
\item $\kappa$ is a Koszul twisting morphism.
\item $\psi_\kappa\colon \Omega_\tau C\rightarrow A$ is a weak equivalence.
\item $\phi_\kappa\colon C\rightarrow B_\tau A$ is a weak equivalence.
\end{enumerate}
\end{proposition}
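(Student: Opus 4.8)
The plan is to prove the cyclic chain of implications $(1)\Rightarrow(2)\Rightarrow(3)\Rightarrow(1)$, driven by a single naturality identity together with the hypothesis that $\tau$ is Koszul. Write $\varepsilon_A\colon\Omega_\tau B_\tau A\to A$ for the counit of the bar--cobar adjunction, that is, the $\Op$-algebra morphism $\psi_\pi$ corresponding under \eqref{bij} to the universal twisting morphism $\pi\colon B_\tau A\to A$; because $\tau$ is a Koszul twisting morphism this map is a quasi-isomorphism for every $A$. The identity I would put at the center of the argument is
\begin{equation*}
\psi_\kappa \;=\; \varepsilon_A\circ\Omega_\tau(\phi_\kappa),
\end{equation*}
expressing the algebra map $\psi_\kappa$ as the cobar construction applied to the coalgebra map $\phi_\kappa$, followed by the counit.

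Granting this identity, the three implications become formal. For $(1)\Rightarrow(2)$ one only observes that a Koszul twisting morphism has $\psi_\kappa$ a quasi-isomorphism, and a quasi-isomorphism of $\Op$-algebras is by definition a weak equivalence. For $(2)\Rightarrow(3)$, if $\psi_\kappa$ is a weak equivalence, hence a quasi-isomorphism, then since $\varepsilon_A$ is a quasi-isomorphism the displayed identity and the two-out-of-three property force $\Omega_\tau(\phi_\kappa)$ to be a quasi-isomorphism; this is exactly the statement that $\phi_\kappa$ is a weak equivalence of $\Coop$-coalgebras. For $(3)\Rightarrow(1)$, suppose $\phi_\kappa$ is a weak equivalence, so that $\Omega_\tau(\phi_\kappa)$ is a quasi-isomorphism. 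The identity then makes $\psi_\kappa$ a quasi-isomorphism, while $\phi_\kappa$ itself is a quasi-isomorphism because the cobar construction \emph{reflects} quasi-isomorphisms (as recorded before the statement). With both $\phi_\kappa$ and $\psi_\kappa$ quasi-isomorphisms, $\kappa$ is Koszul. This last step is the only place where the reflecting property of $\Omega_\tau$ is used, and it is essential: since the cobar construction does not preserve quasi-isomorphisms, knowing merely that $\phi_\kappa$ is a quasi-isomorphism of coalgebras would not suffice to control $\Omega_\tau(\phi_\kappa)$.

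The main obstacle is establishing the central identity cleanly, and I would do this through the universal property behind \eqref{bij}. Since an $\Op$-algebra map out of $\Omega_\tau C$ is determined by the twisting morphism it induces upon precomposition with the universal twisting morphism $\iota\colon C\to\Omega_\tau C$, it suffices to check that $\varepsilon_A\circ\Omega_\tau(\phi_\kappa)$ and $\psi_\kappa$ induce the same twisting morphism. Using that $\iota$ is the operadic unit $\eta[C]$ on underlying objects, naturality of $\eta$ gives $\Omega_\tau(\phi_\kappa)\circ\iota_C=\iota_{B_\tau A}\circ\phi_\kappa$; the defining property $\varepsilon_A\circ\iota_{B_\tau A}=\pi$ of $\varepsilon_A=\psi_\pi$ then reduces the composite to $\pi\circ\phi_\kappa$, which is $\kappa$ by the description of $\phi_\kappa$ in \eqref{diag:universal}. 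This chase is routine within the formalism of \cite{LV}, but it is where the genuine content of the adjunction enters; the remainder of the proof is two-out-of-three applied to the identity, together with the reflecting property of the cobar construction.
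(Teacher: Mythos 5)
Your proof is correct and is essentially the paper's own argument: both rest on the factorization $\psi_\kappa = \epsilon_\tau\circ\Omega_\tau\phi_\kappa$ together with the fact that $\epsilon_\tau\colon\Omega_\tau B_\tau A\to A$ is a quasi-isomorphism because $\tau$ is Koszul, so that $\psi_\kappa$ is a quasi-isomorphism if and only if $\Omega_\tau\phi_\kappa$ is. You merely supply details the paper leaves implicit --- the verification of the central identity via the universal property of $\iota$, and the use of the fact that $\Omega_\tau$ reflects quasi-isomorphisms to recover condition (1) --- both of which are sound.
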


\begin{proof}
Since $\tau$ is a Koszul twisting morphism, the natural morphism of $\Op$-algebras $\epsilon_\tau\colon\Omega_\tau B_\tau A \rightarrow A$ is a quasi-isomorphism. The composite morphism
$$\xymatrix{\Omega_\tau C \ar[r]^-{\Omega_\tau \phi_\kappa} & \Omega_\tau B_\tau A \ar[r]^-{\epsilon_\tau} & A}$$
is equal to $\psi_\kappa$. Therefore, $\psi_\kappa$ is a quasi-isomorphism if and only if $\Omega_\tau \phi_\kappa$ is.
\end{proof}

An \emph{$\infty$-morphism} $A\rightsquigarrow A'$ between $\Op$-algebras is a morphism of $\Coop$-coalgebras $B_\tau A\rightarrow B_\tau A'$, and an $\infty$-morphism $C\rightsquigarrow C'$ between $\Coop$-coalgebras is a morphism of $\Op$-algebras $\Omega_\tau C\rightarrow \Omega_\tau C'$. According to \cite[Theorem 11.4.14]{LV}, if $\Op$ is a Koszul operad then $A\sim A'$ if and only if there exists an $\infty$-quasi-isomorphism $A\rightsquigarrow A'$. From the bijections \eqref{diag:universal} it is clear that $\Tw_\tau(C,A)$ is functorial with respect to $\infty$-morphisms. Furthermore, $\Kos_\tau(C,A)$ is functorial with respect to $\infty$-quasi-isomorphisms. Since any $\infty$-quasi-isomorphism admits an inverse \cite[Theorem 10.4.7]{LV} we get in particular the following result.

\begin{proposition} \label{prop:transfer}
Let $\kappa\colon C\rightarrow A$ be a Koszul twisting morphism relative to a Koszul twisting morphism $\tau\colon \Coop \rightarrow \Op$ where $\Op$ is a Koszul operad. If $C\sim C'$ and $A\sim A'$ then there is a Koszul twisting morphism $\kappa'\colon C' \rightarrow A'$.
\end{proposition}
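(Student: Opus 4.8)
The statement is essentially a bookkeeping consequence of the functorialities established above, so the plan is to assemble them in the right order. First I would record that a weak equivalence, regarded as a strict $\infty$-morphism, is an $\infty$-quasi-isomorphism. For $\Coop$-coalgebras this is immediate from the definitions: a weak equivalence $\phi\colon D\to D'$ is by definition a map for which $\Omega_\tau\phi\colon\Omega_\tau D\to\Omega_\tau D'$ is a quasi-isomorphism, and since an $\infty$-morphism $D\rightsquigarrow D'$ is exactly a morphism $\Omega_\tau D\to\Omega_\tau D'$ of $\Op$-algebras, this says precisely that $\phi$ is an $\infty$-quasi-isomorphism. For $\Op$-algebras I would instead invoke \cite[Theorem 11.4.14]{LV} together with the hypothesis that $\Op$ is Koszul: since $A\sim A'$, there is an $\infty$-quasi-isomorphism $A\rightsquigarrow A'$.

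Next I would turn the coalgebra equivalence $C\sim C'$ into a single $\infty$-quasi-isomorphism. Writing $C\sim C'$ as a zig-zag of weak equivalences of $\Coop$-coalgebras and using the previous paragraph, each arrow is an $\infty$-quasi-isomorphism. By \cite[Theorem 10.4.7]{LV} every $\infty$-quasi-isomorphism admits an inverse, so the backward arrows can be inverted and the entire zig-zag composed into a single $\infty$-quasi-isomorphism; inverting once more if necessary, I obtain an $\infty$-quasi-isomorphism $C'\rightsquigarrow C$, which is the direction needed for the contravariant variable.

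Finally I would transport $\kappa$ using the functoriality of $\Kos_\tau(-;-)$ with respect to $\infty$-quasi-isomorphisms. The covariant $\infty$-quasi-isomorphism $A\rightsquigarrow A'$ induces a map $\Kos_\tau(C;A)\to\Kos_\tau(C;A')$, realized by postcomposing $\phi_\kappa\colon C\to B_\tau A$ with the coalgebra map $B_\tau A\to B_\tau A'$; the contravariant $\infty$-quasi-isomorphism $C'\rightsquigarrow C$ induces a map $\Kos_\tau(C;A')\to\Kos_\tau(C';A')$, realized by precomposing the corresponding $\psi\colon\Omega_\tau C\to A'$ with the algebra map $\Omega_\tau C'\to\Omega_\tau C$. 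Applying the composite of these two maps to $\kappa\in\Kos_\tau(C;A)$ produces the desired Koszul twisting morphism $\kappa'\colon C'\to A'$.

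The only point that requires care, and the one I would regard as the main obstacle, is the asymmetry between the two variables: weak equivalences of $\Op$-algebras and of $\Coop$-coalgebras qualify as $\infty$-quasi-isomorphisms for different reasons, and one must keep track of which direction each $\infty$-quasi-isomorphism points so that it feeds the covariant slot ($A$) and the contravariant slot ($C$) correctly. Once the directions are aligned — using invertibility of $\infty$-quasi-isomorphisms to flip arrows as needed — the conclusion follows formally, with no further computation.
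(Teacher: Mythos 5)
Your proposal is correct and matches the paper's argument: the paper derives this proposition exactly as you do, from the observation that $\Kos_\tau(C;A)$ is functorial with respect to $\infty$-quasi-isomorphisms, the identification of weak equivalences with $\infty$-quasi-isomorphisms (using \cite[Theorem 11.4.14]{LV} on the algebra side, where Koszulness of $\Op$ is needed), and invertibility of $\infty$-quasi-isomorphisms \cite[Theorem 10.4.7]{LV} to compose the zig-zags and orient the arrows. You merely spell out the bookkeeping that the paper leaves implicit, including the correct handling of the contravariant slot.
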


\subsection{Weight gradings and Koszul algebras over Koszul operads}
A \emph{weight grading} on an $\Op$-algebra $A$ is a decomposition
$$A = A(1) \oplus A(2) \oplus \ldots $$
which is compatible with the $\Op$-algebra structure in the sense that the structure map $\mu\colon \Op[A]\rightarrow A$ is weight preserving. The bar construction $B_\tau A = (\Coop[A],d+b)$ is then bigraded by weight $\ell_w$ and bar length $\ell_b$. Since $A$ is concentrated in positive weight, the bar construction is concentrated in the region $\{\ell_w\geq \ell_b\}$. Let $\Diag_b = \{\ell_w = \ell_b\}$ denote the diagonal. The coderivation $b$ preserves weight and decreases the bar length filtration. Let
$$A^\as := \Diag_b\cap \ker(b) \subseteq B_\tau A.$$
$A^\as$ is naturally a $\Coop$-coalgebra and the inclusion map $A^\as \rightarrow B_\tau A$ is a morphism of $\Coop$-coalgebras.

\begin{definition}
We say that the weight grading on $A$ is a \emph{Koszul weight grading} if the inclusion
$$A^\as\rightarrow B_\tau A$$
is a quasi-isomorphism. A \emph{Koszul $\Op$-algebra} is an $\Op$-algebra $A$ that admits a Koszul weight grading. If $A$ is Koszul then we call $A^\as$ the \emph{Koszul dual $\Coop$-coalgebra}.
\end{definition}

Let $C$ be a $\Coop$-coalgebra. A \emph{weight grading} on $C$ is a decomposition
$$C = C(1)\oplus C(2) \oplus \ldots$$
which is compatible with the coalgebra structure in the sense that $\Delta\colon C\rightarrow \Coop[C]$ preserves weight. The cobar construction $\Omega_\tau C = (\Op[C],d+\delta)$ is then bigraded by weight $\ell_w$ and cobar length $\ell_c$. Since $C$ is concentrated in positive weight, the cobar construction is concentrated in the region $\{\ell_w\geq \ell_c\}$ and we let $\Diag_c  = \{\ell_w = \ell_c\}$ denote the diagonal. The derivation $\delta$ preserves weight and increases the cobar length filtration. We define
$$C^\as := \Diag_c / \Diag_c\cap \im(\delta).$$
Then $C^\as$ is an $\Op$-algebra and the projection
$$f\colon \Omega_\tau C\rightarrow C^\as$$
is a morphism of $\Op$-algebras.

\begin{definition}
We say that a weight grading on $C$ is a \emph{Koszul weight grading} if the projection map $f\colon \Omega_\tau C\rightarrow C^\as$ is a quasi-isomorphism. A \emph{Koszul $\Coop$-coalgebra} is a $\Coop$-coalgebra $C$ that admits a Koszul weight grading. If $C$ is Koszul then we call $C^\as$ the \emph{Koszul dual $\Op$-algebra}.
\end{definition}

We will call a twisting morphism $\tau\colon \Coop \rightarrow \Op$ \emph{binary} if it vanishes outside arity $2$.

\begin{theorem} \label{thm:koszul}
Let $\kappa\colon C\rightarrow A$ be a Koszul twisting morphism relative to a binary Koszul twisting morphism $\tau\colon \Coop\rightarrow \Op$. If $C$ and $A$ have trivial differentials, then there are Koszul weight gradings on $C$ and $A$ such that $\phi_\kappa\colon C\rightarrow B_\tau A$ maps $C$ isomorphically onto $A^\as$ and $\psi_\kappa\colon\Omega_\tau C\rightarrow A$ factors through an isomorphism $C^\as\rightarrow A$.
\end{theorem}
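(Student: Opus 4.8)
The plan is to use the binary hypothesis to turn the abstract Koszul condition on $\kappa$ into a concentration statement for the bigrading by weight and bar/cobar length, and then to read off the two isomorphisms from that. I would begin by recording the structure forced by the hypotheses. Since $\Coop$ and $\Op$ are the quadratic dual pair they carry no internal differential, so with $A$ and $C$ having trivial differentials the bar and cobar constructions reduce to $B_\tau A = (\Coop[A],b)$ and $\Omega_\tau C = (\Op[C],\delta)$, each with a single differential. Because $\tau$ is concentrated in arity $2$, the coderivation $b$ merges exactly two inputs and so lowers bar length $\ell_b$ by exactly one, while dually $\delta$ raises cobar length $\ell_c$ by exactly one. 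By the preceding Proposition both $\phi_\kappa$ and $\psi_\kappa$ are quasi-isomorphisms; and since $A^\as = \Diag_b\cap\ker(b)$ carries the zero differential and $C^\as$ is a quotient carrying the zero differential, it will suffice to arrange that $\phi_\kappa$ lands in $A^\as$ and that $\psi_\kappa$ factors through $C^\as$, for then a quasi-isomorphism between objects with trivial differential is automatically an isomorphism.

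Next I would construct the weight gradings. The bar-length-one component of $\phi_\kappa$ is $\kappa$ itself, and since $A$ has trivial differential the quasi-isomorphism $\psi_\kappa$ is surjective; its image is the sub-$\Op$-algebra generated by $\im\kappa$, which is therefore all of $A$, so I set $A(1) := \im\kappa$. The Maurer--Cartan equation $\tau * \kappa = 0$ (here $\partial\kappa = 0$) says precisely that the arity-$2$ part of $\Delta_C$ governs the products of these generators, which is the assertion that $A$ is presented by quadratic relations; this is what allows the product-length filtration to define an honest weight grading $A = A(1)\oplus A(2)\oplus\cdots$. Dually I would equip $C$ with the weight grading adapted to $\kappa$, so that $\kappa$ restricts to an isomorphism $C(1)\to A(1)$ and vanishes on $C(\geq 2)$, and so that $\phi_\kappa$ is weight preserving. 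With $\kappa$ taking values in $A(1)$, the bar-length-$n$ component of $\phi_\kappa$ lies in $\Coop(n)\otimes A(1)^{\otimes n}$, hence on the diagonal $\ell_w = \ell_b$, so indeed $\phi_\kappa(C)\subseteq \Diag_b\cap\ker(b) = A^\as$.

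The concentration step is then formal, and I would run it one weight at a time. The weight-$w$ summand of $B_\tau A$ is a finite complex in the variable $\ell_b$, with $1\le \ell_b\le w$, whose top term $\ell_b = w$ is the diagonal; since $b$ preserves weight, $\phi_\kappa$ restricts to an isomorphism from $C(w)$ onto the full homology of this complex, yet its image sits in the top term. Hence all homology below the diagonal vanishes and $\phi_\kappa$ is an isomorphism $C(w)\xrightarrow{\cong} A^\as(w)$. As the composite $C\xrightarrow{\cong}A^\as\hookrightarrow B_\tau A$ equals $\phi_\kappa$ and is a quasi-isomorphism, the inclusion $A^\as\hookrightarrow B_\tau A$ is one too; that is, the weight grading on $A$ is Koszul and $C\cong A^\as$. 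Dually, $\psi_\kappa$ annihilates $\im(\delta)$ (being a chain map into a complex with zero differential) and annihilates every cobar monomial containing a $C(\geq 2)$ entry (as $\kappa$ vanishes there), so it factors through $C^\as$; the symmetric finite-complex argument in $\ell_c$ yields $C^\as\xrightarrow{\cong} A$ and shows the projection $\Omega_\tau C\to C^\as$ is a quasi-isomorphism, so the weight grading on $C$ is Koszul.

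The main obstacle I anticipate is not this final concentration but the construction of the weight gradings that feeds it: proving that the relations of $A$ really are concentrated in weight $2$, so that the product-length filtration splits, and choosing the grading on $C$ compatibly so that $\kappa$ is diagonal and $\phi_\kappa$ is weight preserving. This is exactly where the binary hypothesis and the Maurer--Cartan equation $\tau * \kappa = 0$ must be used in an essential way; once the gradings are in hand, the diagonal concentration and both isomorphisms follow formally as above.
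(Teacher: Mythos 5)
Your second half --- the concentration argument showing $\im(\phi_\kappa)=\Diag_b\cap\ker(b)$ via $\Diag_b\cap\im(b)=0$, and dually that $\psi_\kappa$ factors through $C^\as$ with injectivity via $\delta(\Diag_c)=0$ --- is correct and is essentially the paper's own verification step. But there is a genuine gap in the first half, and it sits exactly where you yourself flag the difficulty: the \emph{construction} of the weight gradings. You define $A(1)=\im(\kappa)$ and then assert that the Maurer--Cartan equation $\partial(\kappa)+\tau*\kappa=0$ ``is the assertion that $A$ is presented by quadratic relations,'' so that the generator-length filtration splits into a grading. This does not follow: the Maurer--Cartan equation holds for \emph{every} twisting morphism, Koszul or not, and twisting morphisms into non-quadratic algebras exist in abundance (in the associative setting, take $C=\QQ c$ with zero coproduct and $\kappa(c)=x\in\QQ[x]/(x^3)$; the equation is satisfied trivially). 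Quadraticity must come from the quasi-isomorphism property of $\psi_\kappa$, and you never show how it enters. Worse, in the paper quadraticity of $A$ is Theorem \ref{thm:quadratic}, which is \emph{deduced from} Theorem \ref{thm:koszul} (via the identification $A\cong C^\as=\Diag_c/\Diag_c\cap\im(\delta)$ and the fact that the ideal $\Diag_c\cap\im(\delta)$ is generated by the arity-$2$ image of $\tau[C]\circ\Delta$), so assuming it here is close to circular. The grading on $C$ fares no better in your write-up: ``equip $C$ with the weight grading adapted to $\kappa$'' is a wish list of properties ($\kappa$ diagonal, $\phi_\kappa$ weight preserving), not a construction.

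The paper closes this gap without ever discussing presentations: since $C$ has trivial differential and $\tau$ is binary, $\delta$ raises cobar length by \emph{exactly} one, so the homology $\HH_*(\Omega_\tau C)$ is canonically graded by cobar length; since $A$ has trivial differential, $\psi_\kappa$ induces an isomorphism $\HH_*(\Omega_\tau C)\cong A$, and one \emph{transports} this length grading to get the weight grading on $A$ (so $A(p)$ is the image of $\Op(p)\tensor_{\Sigma_p}C^{\tensor p}$ under $\psi_\kappa$). Dually, $b$ lowers bar length by exactly one, $\phi_\kappa$ induces $C\cong\HH_*(B_\tau A)$, and one sets $C(p)=\phi_\kappa^{-1}(\Coop(p)\tensor_{\Sigma_p}A^{\tensor p})$. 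With these definitions the properties you postulated are theorems with short proofs: the factorization $\kappa=\pi\circ\phi_\kappa=\psi_\kappa\circ\iota$ through the universal twisting morphisms shows $\kappa$ vanishes outside weight $1$ and lands in $A(1)$, which is what makes $\phi_\kappa$ diagonal and $\psi_\kappa$ vanish off $\Diag_c$. If you replace your quadraticity step by this transport-of-grading construction, the rest of your argument goes through verbatim and recovers the paper's proof.
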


\begin{proof}
Since $A$ has trivial differential, the quasi-isomorphism $\psi_\kappa\colon \Omega_\tau C \rightarrow A$ is surjective and induces an isomorphism $\HH_*(\Omega_\tau C) \cong A$. Since $C$ has trivial differential and since $\tau$ is binary, the derivation $\delta$ on $\Omega_\tau C$ increases cobar length by exactly $1$. Therefore we can introduce a weight grading on the homology $H = \HH_*(\Omega_\tau C)$ by
$$H(p) = \frac{\ker(\delta\colon \Op(p)\tensor_{\Sigma_p} C^{\tensor p} \rightarrow \Op(p+1)\tensor_{\Sigma_{p+1}} C^{\tensor p+1})}{\im(\delta\colon \Op(p-1)\tensor_{\Sigma_{p-1}} C^{\tensor p-1} \rightarrow \Op(p)\tensor_{\Sigma_p} C^{\tensor p})}$$
which we transport to a weight grading on $A$ via the isomorphism $\HH_*(\Omega_\tau C) \cong A$. It follows that $A(p)$ is the image of $\Op(p)\tensor_{\Sigma_p} C^{\tensor p}$ under $\psi_\kappa$.

Similarly, since $C$ has trivial differential, the quasi-isomorphism $\phi_\kappa\colon C\rightarrow B_\tau A$ is injective and induces an isomorphism $C\cong \HH_*(B_\tau A)$. Since $A$ has trivial differential and since $\tau$ is binary, the coderivation $b$ on the bar construction decreases bar length by exactly $1$ and as before we can introduce a weight grading on the homology $\HH_*(B_\tau A)$ which we transport to $C$. It follows that the induced weight grading on $C$ is given by $C(p) = \phi_\kappa^{-1}(\Coop(p)\tensor_{\Sigma_p} A^{\tensor p})$.

We will now verify that the weight gradings thus constructed are Koszul by proving that the image of $\phi_\kappa$ is equal to $A^\as = \Diag_b\cap\ker(b)$ and that $\psi_\kappa$ factors through an isomorphism $C^\as = \Diag_c/\Diag_c\cap \im(\delta)\cong A$. To this end, first note that $\kappa \colon C\rightarrow A$ vanishes on elements of weight different from $1$ and has image concentrated in weight $1$. This follows from the factorization of $\kappa$ as in the diagram \eqref{diag:universal}. Indeed, the weight grading on $C$ is inherited from the bar length grading on $B_\tau A$ and $\pi\colon B_\tau A\rightarrow A$ vanishes outside bar length $1$. Similarly, the image of $\iota\colon C\rightarrow \Omega_\tau C$ is contained in the cobar length $1$ component, whose image under $\psi_\kappa\colon \Omega_\tau C\rightarrow A$ is contained in $A(1)$. Recall that $\phi_\kappa$ is the composite
$$\xymatrix{C\ar[r]^-\Delta & \Coop[C] \ar[r]^-{\Coop[\kappa]} & \Coop[A].}$$
Since the image of $\kappa$ is contained in weight $1$, this shows that $\im(\phi_\kappa)\subseteq \Diag_b$. Since $C$ has trivial differential, $\im(\phi_\kappa)\subseteq \ker(b)$. Thus, $\im(\phi_\kappa)\subseteq \Diag_b\cap \ker(b)$. That we have equality follows because $\phi_\kappa$ is a quasi-isomorphism. Indeed, if $x\in \Diag_b\cap \ker(b)$ then the homology class of $x$ must be the image of the homology class of some $c\in C$, that is to say $\phi_\kappa(c)-x\in \im(b)$. But we also have that $\phi_\kappa(c)-x\in\Diag_b$. Since $\Diag_b\cap \im(b) = 0$ (which follows from the fact that $\ell_w\geq \ell_b$) we get $\phi_\kappa(c) = x$. Thus, the quasi-isomorphism $\phi_\kappa\colon C\rightarrow B_\tau A$ maps isomorphically onto $A^\as$, so indeed the weight grading on $A$ is Koszul.

Recall that $\psi_\kappa$ is the composite
$$\xymatrix{\Op[C] \ar[r]^-{\Op[\kappa]} & \Op[A] \ar[r]^-\mu & A.}$$
Since $\kappa$ vanishes outside weight $1$, this shows that $\psi_\kappa$ vanishes outside the diagonal $\Diag_c$. Moreover, since $A$ has trivial differential $\psi_\kappa$ factors through $C^\as =  \Diag_c/\Diag_c\cap\im(\delta)\rightarrow A$. Since $\psi_\kappa$ is surjective this map is necessarily surjective. That it is injective follows because $\psi_\kappa$ is a quasi-isomorphism. Indeed, if $x\in \Diag_c\cap\ker(\psi_\kappa)$ then $\delta(x) = 0$ since $\delta(\Diag_c) = 0$ as $\ell_w\geq \ell_c$ and $\delta$ preserves weight but increases cobar length. Thus, the homology class of $x$ gets mapped to zero under $\psi_\kappa$ which implies that $x\in \im(\delta)$ as $\psi_\kappa$ is a quasi-isomorphism. Thus, the quasi-isomorphism $\psi_\kappa\colon \Omega_\tau C \rightarrow A$ factors through an isomorphism $C^\as \cong A$ and therefore the weight grading on $C$ is Koszul.
\end{proof}

\begin{theorem} \label{thm:koszul algebra}
Let $\kappa\colon C\rightarrow A$ be a Koszul twisting morphism relative to a binary Koszul twisting morphism $\tau\colon \Coop\rightarrow \Op$ where $\Op$ is a Koszul operad. The following are equivalent:
\begin{enumerate}
\item $C$ and $A$ are formal. \label{1}
\item $A$ is formal and $\HH_*(A)$ is a Koszul $\Op$-algebra. \label{2}
\item $C$ is formal and $\HH_*(C)$ is a Koszul $\Coop$-coalgebra. \label{3}
\end{enumerate}
\end{theorem}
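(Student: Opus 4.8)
The plan is to prove the equivalence (\ref{1}) $\Leftrightarrow$ (\ref{2}); the equivalence (\ref{1}) $\Leftrightarrow$ (\ref{3}) should then follow by running the same argument with the roles of $\Op$-algebras and $\Coop$-coalgebras, and of the bar and cobar constructions, interchanged. This is legitimate because the entire formalism of twisting morphisms relative to $\tau$ is symmetric in $\Coop$-coalgebras and $\Op$-algebras; in particular Theorem \ref{thm:koszul} already treats $C$ and $A$ symmetrically and Proposition \ref{prop:transfer} applies to weak equivalences on both sides, so only the dual form of the key input below (the coalgebra involutivity) is needed for (\ref{1}) $\Leftrightarrow$ (\ref{3}).

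For (\ref{1}) $\Rightarrow$ (\ref{2}), I would start from the weak equivalences $C\sim \HH_*(C)$ and $A\sim \HH_*(A)$ provided by formality and feed them, together with the given Koszul twisting morphism $\kappa\colon C\rightarrow A$, into the transfer result Proposition \ref{prop:transfer}. This produces a Koszul twisting morphism $\bar\kappa\colon \HH_*(C)\rightarrow \HH_*(A)$ relative to $\tau$. Since $\HH_*(C)$ and $\HH_*(A)$ now carry trivial differentials, Theorem \ref{thm:koszul} applies and equips $\HH_*(A)$ with a Koszul weight grading; that is, $\HH_*(A)$ is a Koszul $\Op$-algebra. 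As $A$ is formal by hypothesis, (\ref{2}) follows.

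For (\ref{2}) $\Rightarrow$ (\ref{1}), write $B=\HH_*(A)$ and let $B^\as$ be its Koszul dual $\Coop$-coalgebra, which has trivial differential. The crucial input is that the canonical twisting morphism $B^\as\rightarrow B$, obtained by composing the inclusion $B^\as\hookrightarrow B_\tau B$ with the universal twisting morphism $\pi$, is itself a Koszul twisting morphism. Granting this, I would transfer it along the weak equivalence $B\sim A$ (formality of $A$) using Proposition \ref{prop:transfer} to obtain a Koszul twisting morphism $\kappa'\colon B^\as\rightarrow A$. Now $\kappa$ and $\kappa'$ are both Koszul twisting morphisms into $A$, so the associated maps $\phi_\kappa\colon C\rightarrow B_\tau A$ and $\phi_{\kappa'}\colon B^\as\rightarrow B_\tau A$ are weak equivalences of $\Coop$-coalgebras, giving a zig-zag $C\sim B_\tau A\sim B^\as$. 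Since $B^\as$ has trivial differential and weak equivalences are quasi-isomorphisms, we get $\HH_*(C)\cong B^\as$ and $C\sim B^\as=\HH_*(C)$; hence $C$ is formal, and together with the hypothesis that $A$ is formal this yields (\ref{1}).

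The hard part is the crucial input used in (\ref{2}) $\Rightarrow$ (\ref{1}). The defining property of a Koszul $\Op$-algebra only asserts that $B^\as\rightarrow B_\tau B$ is a \emph{quasi-isomorphism}, whereas the argument needs it to be a \emph{weak equivalence}, equivalently that $\psi\colon \Omega_\tau B^\as\rightarrow B$ is a quasi-isomorphism. Promoting the former to the latter is exactly the involutivity of Koszul duality, namely that $B^\as$ is a Koszul $\Coop$-coalgebra with $(B^\as)^\as\cong B$. I would establish this by exploiting the weight and (co)bar-length bigradings, for which the hypotheses that $\tau$ is binary and that $\Op$ is a Koszul operad are essential, reducing the statement to the acyclicity of the associated Koszul complex; the point is that this acyclicity is manifestly symmetric in $B$ and $B^\as$, so it simultaneously recovers the defining quasi-isomorphism on the bar side and supplies the missing quasi-isomorphism on the cobar side.
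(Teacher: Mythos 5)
Your proposal follows essentially the same route as the paper's proof: \eqref{1}$\Rightarrow$\eqref{2},\eqref{3} by transferring $\kappa$ to homology via Proposition \ref{prop:transfer} and invoking Theorem \ref{thm:koszul}, and \eqref{2}$\Rightarrow$\eqref{1} by reducing to $A=\HH_*(A)$ via Proposition \ref{prop:transfer} and forming the zig-zag of weak equivalences $C\rightarrow B_\tau A\leftarrow A^\as$, with \eqref{3}$\Rightarrow$\eqref{1} handled by the dual argument. The one divergence is to your credit: where the paper simply asserts that Koszulity of $A=\HH_*(A)$ yields a \emph{weak} equivalence $\HH_*(B_\tau A)\rightarrow B_\tau A$, you correctly isolate the nontrivial input hidden there --- upgrading the defining quasi-isomorphism $A^\as\rightarrow B_\tau A$ to a weak equivalence, i.e.\ showing the canonical twisting morphism $A^\as\rightarrow A$ is Koszul --- and your proposed justification via the weight/length bigradings and acyclicity of the Koszul complex is precisely the criterion the paper itself defers to \cite{Milles}.
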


\begin{proof}
\eqref{1}$\Rightarrow$\eqref{2},\eqref{3}: If $C\sim \HH_*(C)$ and $A\sim \HH_*(A)$ then there is a Koszul twisting morphism $\kappa'\colon \HH_*(C)\rightarrow \HH_*(A)$ by Proposition \ref{prop:transfer}. By Theorem \ref{thm:koszul} this implies that $\HH_*(C)$ and $\HH_*(A)$ are Koszul and Koszul dual to each other.

\eqref{2}$\Rightarrow$\eqref{1}: We need to prove that $C$ is formal. Since $A$ is formal we may assume that $A = \HH_*(A)$ by Proposition \ref{prop:transfer}. Then we have a weak equivalence of $\Coop$-coalgebras $C\rightarrow B_\tau A$. Since $A=\HH_*(A)$ is assumed to be Koszul there is a weak equivalence $\HH_*(B_\tau A)\rightarrow B_\tau A$. So we get a zig-zag of weak equivalences $C\rightarrow B_\tau A \leftarrow \HH_*(B_\tau A)$ showing that $C$ is formal.

The proof of the implication \eqref{3}$\Rightarrow$\eqref{1} is similar.
\end{proof}

\begin{corollary} \label{cor:formal}
Let $\Op$ be a binary Koszul operad with Koszul dual cooperad $\Coop$ and let $\tau\colon \Coop\rightarrow \Op$ be the associated Koszul twisting morphism.
\begin{enumerate}
\item An $\Op$-algebra $A$ with zero differential is Koszul if and only if the bar construction $B_\tau A$ is formal as a $\Coop$-coalgebra.

\item A $\Coop$-coalgebra $C$ with zero differential is Koszul if and only if the cobar construction $\Omega_\tau C$ is formal as an $\Op$-algebra.
\end{enumerate}
\end{corollary}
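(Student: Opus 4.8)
The plan is to derive Corollary \ref{cor:formal} as a specialization of Theorem \ref{thm:koszul algebra}. The key observation is that when $\Op$ is a binary Koszul operad with Koszul dual cooperad $\Coop$, the canonical twisting morphism $\tau\colon\Coop\rightarrow\Op$ is itself a binary Koszul twisting morphism, so the hypotheses of Theorem \ref{thm:koszul algebra} are in force. For part (1), I would start with an $\Op$-algebra $A$ equipped with zero differential, and consider the universal twisting morphism $\pi\colon B_\tau A\rightarrow A$. Since $\tau$ is Koszul, $\pi$ is a Koszul twisting morphism relative to $\tau$, with $C=B_\tau A$ playing the role of the source $\Coop$-coalgebra in Theorem \ref{thm:koszul algebra}.

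With this setup, Theorem \ref{thm:koszul algebra} gives the equivalence of the three conditions applied to $\kappa=\pi$: (1) $B_\tau A$ and $A$ are both formal; (2) $A$ is formal and $\HH_*(A)$ is a Koszul $\Op$-algebra; (3) $B_\tau A$ is formal and $\HH_*(B_\tau A)$ is a Koszul $\Coop$-coalgebra. Here is where the zero-differential hypothesis does its work: since $A$ has trivial differential, $\HH_*(A)=A$ and $A$ is automatically formal. Consequently condition (2) reduces to the single statement that $A$ itself is a Koszul $\Op$-algebra, while condition (1) reduces to the statement that $B_\tau A$ is formal. The equivalence (1)$\Leftrightarrow$(2) of Theorem \ref{thm:koszul algebra} then reads precisely as: $A$ is Koszul if and only if $B_\tau A$ is formal, which is exactly part (1) of the corollary.

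Part (2) is entirely dual. I would take a $\Coop$-coalgebra $C$ with zero differential and apply Theorem \ref{thm:koszul algebra} to the universal twisting morphism $\iota\colon C\rightarrow\Omega_\tau C$, now with $A=\Omega_\tau C$ as the target $\Op$-algebra. Because $C$ has zero differential, $\HH_*(C)=C$ and $C$ is formal, so condition (3) of Theorem \ref{thm:koszul algebra} collapses to saying that $C$ is a Koszul $\Coop$-coalgebra, while condition (1) collapses to saying that $\Omega_\tau C$ is formal. The equivalence (1)$\Leftrightarrow$(3) then yields part (2) of the corollary verbatim.

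I do not anticipate a genuine obstacle here, since the corollary is a clean specialization; the only point requiring a little care is the bookkeeping of which object is assumed to have zero differential and how that forces the corresponding formality condition to become vacuous. In particular one should check that $\pi$ (respectively $\iota$) really is a Koszul twisting morphism relative to $\tau$ so that Theorem \ref{thm:koszul algebra} applies --- but this is immediate from the definition of a Koszul twisting morphism $\tau$, which is exactly the assertion that the universal twisting morphisms are Koszul for all $A$ and all $C$. Thus the whole argument amounts to recognizing $\pi$ and $\iota$ as the relevant twisting morphisms and reading off the simplified equivalences.
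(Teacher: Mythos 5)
Your proposal is correct and is exactly the intended derivation: the paper states the corollary as an immediate specialization of Theorem \ref{thm:koszul algebra} applied to the universal twisting morphisms $\pi\colon B_\tau A\rightarrow A$ and $\iota\colon C\rightarrow \Omega_\tau C$, which are Koszul precisely because $\tau$ is, and you correctly note that the zero-differential hypothesis makes $A$ (respectively $C$) trivially formal, collapsing the three-way equivalence to the stated biconditional. Your observation that the canonical twisting morphism of a binary Koszul operad vanishes outside arity $2$, so that the binarity hypothesis of Theorem \ref{thm:koszul algebra} is satisfied, is also the right point to check.
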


If $\Op$ is a Koszul operad with Koszul dual cooperad $\Coop$, then the \emph{Koszul dual operad} is defined by $\Op^! = (\Sigma \Coop)^\vee$. If $A$ is a Koszul $\Op$-algebra with Koszul dual $\Coop$-coalgebra $C$, then the \emph{Koszul dual $\Op^!$-algebra} is defined to be the weight graded algebra $\Op^!$-algebra $A^!$ with $A^!(p) = (sC(p))^\vee$.

\begin{theorem} \label{thm:quadratic}
Let $A$ be a Koszul algebra over a binary Koszul operad $\Op$. Then $A$ is quadratic, that is, $A$ has a presentation of the form
$$A = \Op[V]/(R)$$
where $R\subseteq \Op[V](2) = \Op(2)\tensor_{\Sigma_2} V^{\tensor 2}$. Furthermore, the Koszul dual $\Op^!$-algebra $A^!$ is also Koszul and has quadratic presentation
$$A^! = \Op^![(sV)^\vee]/(R^\perp),$$
where $R^\perp\subseteq \Op^![(sV)^\vee](2)$ is the annihilator of $R\subseteq \Op[V](2)$ with respect to the induced pairing of degree $2$
$$\langle \,\, , \, \rangle \colon \Op^![(sV)^\vee](2) \tensor \Op[V](2) \rightarrow \QQ.$$
\end{theorem}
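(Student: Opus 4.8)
The plan is to read off the quadratic data directly from the isomorphism $C^\as\cong A$ provided by Theorem \ref{thm:koszul}, and then to deduce everything about $A^!$ by linear duality. Throughout I write $C=A^\as$ for the Koszul dual $\Coop$-coalgebra and $\kappa\colon C\to A$ for the associated Koszul twisting morphism, and I assume everything is of finite type so that linear duals are well behaved. First I would establish that $A$ is quadratic by unwinding the factorization $\psi_\kappa\colon \Omega_\tau C\to C^\as\xrightarrow{\cong}A$ of Theorem \ref{thm:koszul}. Set $V=A(1)$; since $\Coop(1)=\QQ$ we have $C(1)=V$. On the diagonal $\Diag_c$ the constraint $\ell_w=\ell_c$ forces every cobar input to lie in $C(1)=V$, so $\Diag_c=\bigoplus_p \Op(p)\tensor_{\Sigma_p}V^{\tensor p}=\Op[V]$ as a free $\Op$-algebra. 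Because $\tau$ is binary, $\delta$ raises cobar length by exactly one, so in each weight $p$ the subspace $\Diag_c\cap\im(\delta)$ is the image under $\delta$ of the weight-$p$, cobar-length-$(p-1)$ component, in which exactly one input lies in $C(2)$ and the rest in $V$. As $\delta$ is the $\Op$-derivation induced by $\tau$, this image is precisely the weight-$p$ part of the $\Op$-ideal generated by
$$R:=\delta\bigl(C(2)\bigr)\subseteq \Op(2)\tensor_{\Sigma_2}V^{\tensor 2}=\Op[V](2).$$
Hence $C^\as=\Op[V]/(R)$, and therefore $A\cong\Op[V]/(R)$ is quadratic.

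I would also record the companion identification in weight two, which is what ultimately matches $A^!$ against the orthogonal presentation. The diagonal condition gives $C(2)=\ker\bigl(\Coop(2)\tensor_{\Sigma_2}V^{\tensor 2}\xrightarrow{\tau(2)\tensor\id}\Op(2)\tensor_{\Sigma_2}V^{\tensor 2}\xrightarrow{\mu}A(2)\bigr)$, and since $\tau$ is a binary Koszul twisting morphism for a Koszul operad, the arity-two component $\tau(2)\colon\Coop(2)\to\Op(2)$ is an isomorphism (a desuspension). Thus $\tau(2)\tensor\id$ carries $C(2)$ isomorphically onto $R=\ker(\mu)$, so that $sC(2)\cong R$.

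Next I would prove that $A^!$ is Koszul and quadratic. The essential input here is the involutivity of Koszul duality for operads, $(\Op^!)^!\cong\Op$, together with linear duality: dualizing the quasi-isomorphism $A^\as\to B_\tau A$ that witnesses the Koszulity of $A$ yields the corresponding quasi-isomorphism for $A^!$ over $\Op^!$. Equivalently, $\kappa$ dualizes to a Koszul twisting morphism $A^\vee\to A^!$ relative to the Koszul twisting morphism associated to $\Op^!$. Since $\Op^!$ is again a binary Koszul operad, it follows that $A^!$ is a Koszul $\Op^!$-algebra, and rerunning the argument of the first paragraph for $A^!$ shows that $A^!$ is quadratic, generated by $A^!(1)=(sC(1))^\vee=(sV)^\vee$. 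It then remains to identify the relations of $A^!$ as the orthogonal space $R^\perp$. By definition of the $\Op^!$-algebra structure on $A^!=(sC)^\vee$, the weight-two structure map $\Op^![(sV)^\vee](2)\to A^!(2)=(sC(2))^\vee$ is the linear dual of the inclusion $C(2)\hookrightarrow\Coop(2)\tensor_{\Sigma_2}V^{\tensor 2}$, so its kernel is the annihilator $C(2)^\perp$. Transporting along the isomorphism $\Op^![(sV)^\vee](2)\cong\Op[V](2)^\vee$ induced by the nondegenerate pairing $\langle\,,\,\rangle$, and using the identification $sC(2)\cong R$ from the second paragraph, this annihilator becomes exactly $R^\perp$. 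Hence $A^!=\Op^![(sV)^\vee]/(R^\perp)$.

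The step I expect to be the main obstacle is the duality argument for the Koszulity of $A^!$: one must check that linear duality sends Koszul twisting morphisms to Koszul twisting morphisms and intertwines the bar and cobar constructions over $\Op$ and $\Op^!$. This is what forces the finite-type hypothesis, and it requires careful bookkeeping of the operadic suspension $\Sigma$, the desuspension $s$, and the passage between $\Sigma_2$-coinvariants and $\Sigma_2$-invariants that enters the very definition of $R^\perp$.
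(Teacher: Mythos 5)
Your proposal is correct and takes essentially the same route as the paper's own proof: quadraticity is read off from the identification $A\cong C^\as=\Op[V]/\bigl(\Diag_c\cap\im(\delta)\bigr)$ of Theorem \ref{thm:koszul} together with the observation that a binary $\tau$ makes $\delta$ raise cobar length by exactly one, Koszulity of $A^!$ is obtained by dualizing the Koszul data (the paper identifies the bar construction of $A^!$ up to shift with the weight-graded dual of $\Omega_\tau C$, which is the same mechanism as your dualized Koszul twisting morphism $A^\vee\to A^!$), and the relations of $A^!$ are identified via the same weight-two exact sequence $0\to C(2)\to\Coop(2)\tensor_{\Sigma_2}V^{\tensor 2}\to sA(2)\to 0$ with $\tau(2)$ an isomorphism, exactly as in the paper's diagram \eqref{ortho}. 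The suspension and finite-type bookkeeping you flag as the main obstacle is treated at the same (implicit) level of detail in the paper, so nothing is missing relative to its argument.
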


\begin{proof}
Theorem \ref{thm:koszul} identifies $A$ with $C^\as = \Diag_c/\Diag_c\cap \im(\delta)$ where $C= A^\as$. The diagonal $\Diag_c\subseteq \Omega_\tau C$ may be identified with the free $\Op$-algebra $\Op[V]$ where $V=C(1)$ is the weight $1$ component of $C$. We need to identify the generators of the $\Op$-ideal $\Diag_c\cap \im(\delta)\subseteq \Diag_c = \Op[V]$. By definition, the map $\delta$ is the unique $\Op$-derivation that makes the diagram
$$\xymatrix{C \ar[d]^-{\eta[C]} \ar[r]^-{\Delta} & \Coop[C] \ar[d]^-{\tau[C]} \\ \Op[C] \ar[r]^-\delta & \Op[C]}$$
commute. This implies that the $\Op$-ideal $\Diag_c\cap \im(\delta)$ is generated by the image of the map
$$\xymatrix{C \ar[r]^-\Delta & \Coop[C] \ar[r]^-{\tau[C]} & \Op[C]}$$
intersected with the diagonal. But if $\Op$ is binary, the twisting morphism $\tau$ vanishes outside arity $2$. Therefore, this image is contained in the subspace $\Op(2)\tensor_{\Sigma_2} V^{\tensor 2}$. This proves the first part of the theorem. The fact that $A^!$ is Koszul follows because its bar construction can be identified, up to a shift, with the weight graded dual of the cobar construction on $C$.

To prove the statement about the orthogonal presentation for $A^!$, first note that $\kappa\colon C\rightarrow A$ restricts to an isomorphism of weight $1$ components; $\kappa(1)\colon C(1) \cong A(1)$. We identify these components and write $V$ for both. By definition, the surjective morphism of $\Op$-algebras $\psi_\kappa \colon \Op[C]\rightarrow A$ is the composite
$$\xymatrix{\Op[C] \ar[r]^-{\Op[\kappa]} & \Op[A] \ar[r]^\mu & A}$$
It vanishes outside the diagonal $\Diag_c = \Op[V]$ and we use $\mu$ to denote also the induced surjective morphism of $\Op$-algebras $\Op[V]\rightarrow A$. Similarly, by definition the injective morphism of $\Coop$-coalgebras $\phi_\kappa \colon C\rightarrow \Coop[A]$ is the composite
$$\xymatrix{C \ar[r]^-\Delta & \Coop[C] \ar[r]^-{\Coop[\kappa]} & \Coop[A]}$$
It has image $\Diag_b\cap \ker b$ and in particular it factors through an injective morphism of $\Coop$-coalgebras $C\rightarrow \Diag_b = \Coop[V]$ that we will also denote by $\Delta$. By inspecting the definitions of the bar and cobar differentials $b$ and $\delta$ we have the following commutative diagram with exact rows in weight $2$:

\begin{equation} \label{ortho}
\xymatrix{0 \ar[r] & C(2) \ar@{=}[d] \ar[r]^-{\Delta(2)} & \Coop(2)\tensor_{\Sigma_2} V^{\tensor 2} \ar[d]_-\cong^-{\tau(2)\tensor 1^{\tensor 2}} \ar[r]^-{b(2)} & sA(2) \ar@{=}[d] \ar[r] & 0 \\
0 \ar[r] & C(2) \ar[r]^-{\delta(2)} & s\Op(2)\tensor_{\Sigma_2} V^{\tensor 2} \ar[r]^-{\mu(2)} & sA(2) \ar[r] & 0} 
\end{equation}

The injective morphism of weight graded $\Coop$-coalgebras $\Delta\colon C\rightarrow \Coop[V]$ dualizes (weight-wise) to a surjective morphism of $\Op^!$-algebras
$$\xymatrix{(s\Coop[V])^\vee \ar@{>>}[r]^-{(s\Delta)^\vee} \ar@{=}[d] & (sC)^\vee \ar@{=}[d] \\ \Op^![(sV)^\vee] \ar@{>>}[r] & A^!}$$
Since $A^!$ is Koszul the kernel of this morphism is generated in weight $2$ as an $\Op^!$-ideal. In weight $2$, diagram \eqref{ortho} shows that the kernel of $(s\Delta(2))^\vee$ may be identified with $R^\perp$, since the kernel of $\mu(2)$ is $R$ by definition.
\end{proof}

Let us be more explicit about the relation between $R$ and $R^\perp$. Let $W = (sV)^\vee$. The pairing $\langle \, , \, \rangle \colon W\tensor V\rightarrow \QQ$ of degree $1$ extends to a pairing of degree $2$
$$\langle \, , \, \rangle \colon \left( \Op^!(2)\tensor_{\Sigma_2} W^{\tensor 2} \right) \tensor \left( \Op(2)\tensor_{\Sigma_2} V^{\tensor 2} \right) \rightarrow \QQ$$
defined by
$$\langle (f;\alpha,\beta) , (\mu; x,y) \rangle = (-1)^\epsilon \langle f,\mu\rangle \langle \alpha , x\rangle \langle \beta, y\rangle + (-1)^\eta \langle f\tau,\mu\rangle \langle \beta,x\rangle \langle \alpha,y\rangle$$
where the signs are given by
$$\epsilon = |\mu|(|\alpha| + |\beta|) + |\beta||x| + |\alpha| + |x|$$
$$\eta = |\mu|(|\alpha|+|\beta|) + |\alpha||\beta| + |\alpha||x| + |\beta| + |x|$$

Every Koszul algebra over a binary Koszul operad is quadratic, but it is well known that not every quadratic algebra is Koszul. However, given a quadratic $\Op$-algebra $A$, the orthogonal presentation defines a quadratic $\Op^!$-algebra $A^!$ and there is an associated twisting morphism $\kappa \colon A^\as = (sA^!)^\vee \rightarrow A$. There is a natural chain complex associated to $\kappa$ called the \emph{Koszul complex}, which is acyclic if and only if $\kappa$ is a Koszul twisting morphism. This is a useful technique to prove that a given quadratic algebra is Koszul. We refer to \cite{Milles} for details.

\section{Rational homotopy theory and Koszul duality} \label{sec:rht}
The rational homotopy type of a simply connected space $X$ with finite Betti numbers is modeled in Sullivan's approach \cite{Sullivan} by a commutative differential graded algebra $A_{PL}^*(X)$ with cohomology $\HH^*(X;\QQ)$ and in Quillen's approach \cite{Quillen} by a differential graded Lie algebra $\lambda(X)$ with homology $\pi_*(\Omega X)\tensor \QQ$, see also \cite{FHT-RHT}.

There is a Koszul twisting morphism $\tau\colon \Com^\vee \rightarrow \Sigma \Lie$ where $\Com^\vee$ is the cooperad whose coalgebras are non-counital cocommutative coalgebras and $\Sigma \Lie$ is the operad whose algebras are suspensions of Lie algebras. The associated bar and cobar constructions are related to the classical Quillen functors \cite{Quillen,Neisendorfer,FHT-RHT}
$$\adjunction{DGC}{DGL}{\mathscr{C}}{\mathscr{L}}$$
as follows. For a dgc $C$ and a dgl $L$
\begin{equation*}
\Omega_\tau(\overline{C}) = s\mathscr{L}(C), \quad B_\tau(sL) = \QQ\oplus \mathscr{C}(L).
\end{equation*}

The \emph{Baues-Lemaire conjecture} \cite[Conjecture 3.5]{BL}, proved by Majewski \cite{Majewski} (see also \cite[Theorem 26.5]{FHT-RHT}), can be expressed in the language of Koszul duality theory in the following way:

\begin{quotation}
{\emph{Quillen's and Sullivan's approaches to rational homotopy theory are Koszul dual to one another under the Koszul duality between the commutative and the Lie operad.}}
\end{quotation}

A more precise statement is the following.
\begin{theorem} \label{thm:bl}
Let $X$ be a simply connected space with finite Betti numbers and let $M_X$ be its minimal model. There is a quasi-isomorphism of differential graded Lie algebras
$$\mathscr{L}(M_X^\vee)\stackrel{\sim}{\rightarrow} \lambda(X).$$
\end{theorem}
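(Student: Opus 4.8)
The plan is to factor the comparison through the Chevalley--Eilenberg coalgebra of Quillen's model and then invoke the Koszul property of $\tau$. The topological input I would rely on is Quillen's foundational theorem \cite{Quillen,FHT-RHT}: the cocommutative coalgebra $\mathscr{C}(\lambda(X))$ models the rational chains of $X$, i.e.\ there is a zig-zag of quasi-isomorphisms of coaugmented cocommutative dg coalgebras connecting $\mathscr{C}(\lambda(X))$ to $C_*(X;\QQ)$. Dualizing, $\mathscr{C}(\lambda(X))^\vee$ is a commutative dg algebra quasi-isomorphic to $A_{PL}^*(X)$, hence to the minimal model $M_X$. Since $X$ has finite Betti numbers, $M_X = (\Lambda V, d)$ has $V$ of finite type and simply connected, so linear duality is exact and $C := M_X^\vee$ is a well-defined simply connected coaugmented cocommutative dg coalgebra; likewise $\lambda(X)$, and therefore $\mathscr{C}(\lambda(X)) = \Lambda(s\lambda(X))$, is of finite type, so double dualization is harmless.

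First I would upgrade this cohomological comparison to a statement about coalgebras. Because $M_X$ is cofibrant (quasi-free) and $\mathscr{C}(\lambda(X))^\vee$ lies in the same weak equivalence class of commutative dg algebras, there is a direct quasi-isomorphism $M_X \to \mathscr{C}(\lambda(X))^\vee$. Applying $(-)^\vee$, using finite type so that duality is strictly functorial and reflects and preserves quasi-isomorphisms, and precomposing with the canonical isomorphism $\mathscr{C}(\lambda(X)) \cong \mathscr{C}(\lambda(X))^{\vee\vee}$, I obtain a quasi-isomorphism of coaugmented cocommutative dg coalgebras $\mathscr{C}(\lambda(X)) \to M_X^\vee = C$. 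In particular $C \sim \mathscr{C}(\lambda(X))$ as $\Com^\vee$-coalgebras.

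Next I would apply the functor $\mathscr{L}$. Under the dictionary $s\mathscr{L}(-) = \Omega_\tau(\overline{(-)})$, the functor $\mathscr{L}$ is (a desuspension of) the cobar construction, so it preserves weak equivalences of coalgebras and yields a weak equivalence $\mathscr{L}(C) \sim \mathscr{L}\mathscr{C}(\lambda(X))$. The final step is the counit of the adjunction $\mathscr{L} \dashv \mathscr{C}$ at $\lambda(X)$, namely the morphism $\mathscr{L}\mathscr{C}(\lambda(X)) \to \lambda(X)$; under the identifications $s\mathscr{L}(-) = \Omega_\tau(\overline{(-)})$ and $B_\tau(sL) = \QQ \oplus \mathscr{C}(L)$ this is precisely the universal morphism $\Omega_\tau B_\tau A \to A$ at $A = s\lambda(X)$, which is a quasi-isomorphism because $\tau\colon \Com^\vee \to \Sigma\Lie$ is a Koszul twisting morphism. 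Composing gives $\mathscr{L}(M_X^\vee) = \mathscr{L}(C) \sim \mathscr{L}\mathscr{C}(\lambda(X)) \xrightarrow{\sim} \lambda(X)$, and since $\mathscr{L}(M_X^\vee)$ is quasi-free, hence cofibrant, the resulting weak equivalence rectifies to an honest quasi-isomorphism $\mathscr{L}(M_X^\vee) \xrightarrow{\sim} \lambda(X)$ of differential graded Lie algebras.

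I expect the main obstacle to be the first paragraph: importing Quillen's comparison between $\mathscr{C}(\lambda(X))$ and the rational chain coalgebra of $X$ is where the genuine topology resides, and it is not a formal consequence of the operadic machinery of Section \ref{sec:koszul}. A secondary but essential technical point is the careful bookkeeping of the finite-type and simple-connectivity hypotheses, so that linear duality is exact and carries quasi-isomorphisms of coalgebras to quasi-isomorphisms of algebras and back; without finite Betti numbers the dual of $A_{PL}^*(X)$ need not be a well-behaved coalgebra and the argument breaks. Everything downstream of Quillen's theorem is then formal, being a direct application of the Koszulness of $\tau$ recorded earlier in this section.
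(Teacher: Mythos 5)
The formal, operadic part of your argument is fine, but the step you label as the ``topological input'' is not Quillen's theorem: it is, in essence, the theorem you are being asked to prove. Quillen \cite{Quillen} gives an equivalence of homotopy categories together with natural isomorphisms $\HH_*(\lambda(X)) \cong \pi_*(\Omega X)\tensor\QQ$ and $\HH_*(\mathscr{C}(\lambda(X))) \cong \HH_*(X;\QQ)$, the latter an isomorphism of graded coalgebras \emph{on homology}; he provides no chain-level comparison within cocommutative coalgebras, and indeed cannot in the form you state it, since $C_*(X;\QQ)$ with the Alexander--Whitney diagonal is coassociative but not cocommutative, so it is not even an object of the category in which your zig-zag is supposed to live (rectifying its $E_\infty$-structure to a strictly cocommutative model is a much later development and would amount to a different proof). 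What your first paragraph asserts after dualization --- that $\mathscr{C}(\lambda(X))^\vee$ is weakly equivalent to $A_{PL}^*(X)$, hence to $M_X$, as commutative dg algebras --- is precisely the Baues--Lemaire conjecture \cite[Conjecture 3.5]{BL}: by the same bar--cobar manipulations you perform, run in the other direction using that $\tau\colon\Com^\vee\rightarrow\Sigma\Lie$ is Koszul, it is equivalent to $\mathscr{L}(M_X^\vee)\simeq\lambda(X)$ itself. Knowing that both cdgas have cohomology $\HH^*(X;\QQ)$ does not supply the equivalence --- that would be a formality-type inference, false in general --- so a genuine comparison of Quillen's construction with Sullivan's is unavoidable. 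This is exactly why the paper gives no independent proof of this theorem but cites Majewski \cite{Majewski} and \cite[Theorem 26.5]{FHT-RHT}, where that comparison is actually carried out.

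Everything downstream of the assumed input is correct in outline, with one caveat that the paper itself flags: a quasi-isomorphism of $\Com^\vee$-coalgebras need not be a weak equivalence, i.e.\ need not be carried to a quasi-isomorphism by $\Omega_\tau$ (equivalently by $\mathscr{L}$). Your zig-zag, if it existed, would consist of quasi-isomorphisms, so to conclude $C\sim\mathscr{C}(\lambda(X))$ you must invoke the classical fact that for simply connected cocommutative coalgebras of finite type quasi-isomorphisms \emph{are} weak equivalences (see \cite[\S 22]{FHT-RHT}); you use simple connectivity and finite type for duality bookkeeping but never at this point, which is where those hypotheses do real work. To repair the proof you should either cite Majewski's theorem directly, as the paper does, or descend into Quillen's chain of adjunctions and construct the cdga-level comparison by hand; it is not a formal consequence of the homology-level statements in \cite{Quillen}, nor of the operadic machinery of Section \ref{sec:koszul}.
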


Whereas Sullivan's approach extends to nilpotent spaces, Quillen only defined his Lie model $\lambda(X)$ for simply connected spaces $X$. However, if $X$ is nilpotent and of finite $\QQ$-type, then its minimal model $M_X$ is of finite type and can therefore be dualized to a differential graded coalgebra $C_X=M_X^\vee$. The Lie algebra $\mathscr{L}(C_X)$ may serve as a replacement for the Quillen model, as justified by Theorem \ref{thm:bl}. This approach was carried out in \cite{Neisendorfer}.

\begin{theorem} [See {\cite[Proposition 8.2, 8.3]{Neisendorfer}}] \label{thm:nil}
Let $X$ be a nilpotent space of finite $\QQ$-type and let $M_X$ be its minimal model. Then there is an isomorphism of graded Lie algebras
$$\HH_{\geq 1} \mathscr{L}(M_X^\vee) \cong  \pi_{\geq 1}(\Omega X)\tensor \QQ.$$
Furthermore, there is an isomorphism of Lie algebras
$$\HH_0 \mathscr{L}(M_X^\vee) \cong l(\pi_1(X)\tensor \QQ),$$
where the right hand side is the Lie algebra of the Malcev completion of the fundamental group of $X$ \cite{Quillen}.
\end{theorem}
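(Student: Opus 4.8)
The plan is to compute the entire graded Lie algebra $\HH_* \mathscr{L}(M_X^\vee)$ at once, recovering the higher rational homotopy in positive degrees and the Malcev Lie algebra in degree zero, bootstrapping from the simply connected case and isolating the fundamental group. I would begin with the simply connected case, where $\pi_1 = 0$ and only the positive-degree statement is nonvacuous. Here Theorem \ref{thm:bl} supplies a quasi-isomorphism of differential graded Lie algebras $\mathscr{L}(M_X^\vee) \xrightarrow{\sim} \lambda(X)$, while Quillen's theorem identifies $\HH_* \lambda(X)$ with $\pi_*(\Omega X)\otimes\QQ$ as graded Lie algebras under the Samelson product; composing settles this case. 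Since $\lambda(X)$ is only available for simply connected $X$, the nilpotent case must route through this one rather than use $\lambda(X)$ directly.

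For general nilpotent $X$, I would write the finite-type minimal model as $M_X = (\Lambda V, d)$ with $V = V^1 \oplus V^{\geq 2}$, where $V^1 = (\pi_1(X)\otimes\QQ)^\vee$ and $V^n = (\pi_n(X)\otimes\QQ)^\vee$ for $n \geq 2$, and exploit the universal cover fibration $\tilde X \to X \to K(\pi_1, 1)$. The sub-cdga $(\Lambda V^1, d)$ is the minimal model of the rational $K(\pi_1,1)$ and $M_X$ is the corresponding relative (Hirsch) extension adjoining $V^{\geq 2}$; dualizing gives a sequence of coalgebras to which I would apply $\mathscr{L}$ and run the resulting spectral sequence. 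In positive degrees the generators come from $V^{\geq 2}$, and since $\pi_{\geq 2}(X) = \pi_{\geq 2}(\tilde X)$ with Whitehead products that lift to the simply connected cover, this part reduces to the simply connected case already handled; the bracket of degree zero with the positive part then records the nilpotent $\pi_1$-action.

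The degree-zero statement I would obtain directly from the Koszul duality machinery. Let $\mathfrak{g} = l(\pi_1(X)\otimes\QQ)$ be the Malcev Lie algebra, a finite-dimensional nilpotent Lie algebra concentrated in degree $0$. By Sullivan's and Quillen's computation of the $1$-minimal model, the dual of $(\Lambda V^1, d)$ is precisely the Chevalley--Eilenberg coalgebra $\mathscr{C}(\mathfrak{g})$. The counit $\mathscr{L}\mathscr{C}(\mathfrak{g}) \to \mathfrak{g}$ is a quasi-isomorphism --- this is exactly the Koszulness of the twisting morphism $\tau\colon \Com^\vee \to \Sigma\Lie$ --- so $\HH_0 \mathscr{L}((\Lambda V^1, d)^\vee) \cong \mathfrak{g}$ while the higher homology of this piece vanishes.

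The main obstacle is assembling these two computations into a single isomorphism of graded Lie algebras over the non-simply-connected base: one must verify that the spectral sequence of the fibration converges (guaranteed by nilpotency of the $\pi_1$-action, which makes the filtration exhaustive and complete), that no nontrivial extensions perturb the identification, and, most delicately, that the bracket on $\HH_0$ produced algebraically matches the Malcev bracket coming from group commutators while the mixed brackets reproduce the Whitehead action of $\pi_1$ on $\pi_{\geq 2}$. Reconciling the purely algebraic free-Lie-algebra computation with the group-theoretic Malcev completion of the non-abelian group $\pi_1$ is the crux, and is where I would invoke Quillen's identification of the Malcev Lie algebra with the primitives of the completed rational group algebra.
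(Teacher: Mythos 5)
The first thing to note is that the paper contains no proof of this theorem at all: it is imported wholesale from Neisendorfer \cite[Propositions 8.2, 8.3]{Neisendorfer}, so the comparison must be with Neisendorfer's argument rather than with anything internal to the paper. His method is not yours: he extends Quillen's adjunction between coalgebras and Lie algebras directly to the nilpotent setting and identifies the homotopy groups by induction over a principal refinement of the Postnikov tower, via cocellular constructions; in 1978 he could not, of course, appeal to Majewski's theorem (Theorem \ref{thm:bl} here, proved in 2000), which you take as the engine of your simply connected base case. That said, your base case is logically unobjectionable within this paper's framework, and your degree-zero computation is sound: the $1$-minimal model of a nilpotent $K(\pi,1)$ of finite $\QQ$-type is the Chevalley--Eilenberg algebra of the finite-dimensional nilpotent Lie algebra $\mathfrak{g} = l(\pi_1(X)\tensor\QQ)$ (Sullivan), the coalgebra $\mathscr{C}(\mathfrak{g})$ is conilpotent, and the counit $\mathscr{L}\mathscr{C}(\mathfrak{g})\rightarrow \mathfrak{g}$ is a quasi-isomorphism by Koszulness of $\tau\colon \Com^\vee\rightarrow \Sigma \Lie$.

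The genuine gap is that the assembly step, which you yourself identify as the crux, is announced rather than carried out, and it is where the entire content of the theorem sits. Concretely: (a) you never specify the filtration on $\mathscr{L}(M_X^\vee)$ whose spectral sequence you propose to run, nor verify its convergence --- ``guaranteed by nilpotency'' is a hope, not an argument, since $\mathscr{L}(M_X^\vee)$ is infinite-dimensional in every homological degree (already its degree-zero part is a free Lie algebra on $(V^1)^\vee$ before passing to homology), so a filtration by bracket length in degree-zero generators or by the lower central series of the ideal they generate requires a Boardman-type completeness check; (b) the claimed reduction of positive degrees to the simply connected case amounts to proving that the map $\mathscr{L}\bigl((\Lambda V^{\geq 2},\bar{d})^\vee\bigr)\rightarrow \mathscr{L}(M_X^\vee)$, dual to the cdga projection $M_X\rightarrow (\Lambda V^{\geq 2},\bar{d})$ onto the fiber model, induces an isomorphism on $\HH_{\geq 1}$ compatibly with brackets, and likewise that $\HH_0\mathscr{L}(M_X^\vee)\rightarrow \HH_0\mathscr{L}\mathscr{C}(\mathfrak{g})$ is an isomorphism; these edge identifications \emph{are} the theorem, and your outline supplies no mechanism for them beyond the unexecuted spectral sequence. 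One point in your favor: part of what you flag as the hardest issue --- matching the mixed brackets between $\HH_0$ and $\HH_{\geq 1}$ with the $\pi_1$-action on higher homotopy --- is not actually required, since the theorem asserts nothing about mixed brackets and the Remark immediately following it in the paper explicitly disclaims such an interpretation. Trimmed of that, your program is a plausible route to a proof, but as written it defers precisely the steps that Neisendorfer's cited propositions exist to supply.
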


\begin{remark}
As stated, Theorem \ref{thm:nil} says nothing about the interpretation of the Lie bracket
$$\HH_0(\mathscr{L}(M_X^\vee)) \tensor \HH_n(\mathscr{L}(M_X^\vee)) \rightarrow \HH_n(\mathscr{L}(M_X^\vee))$$
for $n\geq 1$, but it is natural to expect that it corresponds to the action of $\pi_1(X)$ on the higher homotopy groups.
\end{remark}

\begin{proof}[Proof of Theorem \ref{thm:koszul space} and Theorem \ref{thm:koszul dual}]
Theorem \ref{thm:koszul dual} and the equivalence of \eqref{ks1}, \eqref{ks2} and \eqref{ks3} follow directly by combining Theorem \ref{thm:koszul algebra}, Theorem \ref{thm:bl} and Theorem \ref{thm:nil}. The equivalence \eqref{ks2}$\Leftrightarrow$\eqref{ks4} follows from the Sullivan-de Rham Equivalence Theorem \cite[Theorem 9.4]{BG} and the Localization Theorem \cite[Theorem 11.2]{BG}: If $X$ is connected, nilpotent and of finite $\QQ$-type, then the map $X\rightarrow \langle M_X \rangle$ is a rational homotopy equivalence, where $M_X$ is the minimal model for $X$. If $X$ is formal and $\HH^*(X;\QQ)$ is Koszul, then the minimal model for $X$ is also a minimal model for the cohomology and hence $\langle M_X\rangle$ represents the derived spatial realization of the Koszul algebra $\HH^*(X;\QQ)$. Conversely, if $X$ is rationally homotopy equivalent to the derived spatial realization of a Koszul algebra $A$, this means that $X\sim_\QQ \langle M_A \rangle$ where $M_A$ is the minimal model for the Koszul algebra $A$. By \cite[Theorem 9.4]{BG}, this implies that $M_A$ is also the minimal model for $X$, and since $M_A\stackrel{\sim}{\rightarrow} A$ this implies that $X$ is formal and that $\HH^*(X;\QQ) \cong A$.
\end{proof}

\section{Koszul duality for Gerstenhaber $n$-algebras and rational homology of $n$-fold loop spaces} \label{sec:g}
A \emph{Gerstenhaber $n$-algebra} is a graded vector space $A$ together with two binary operations
$$\mu \colon A_p\tensor A_q\rightarrow A_{p+q}, \quad \lambda \colon A_p\tensor A_q\rightarrow A_{p+q+n-1},$$
such that for all $x,y,z\in A$, writing $xy = \mu(x\tensor y)$ and $[x,y] = \lambda(x\tensor y)$,
$$xy = (-1)^{|x||y|}yx, \quad (xy)z = x(yz),$$
$$[x,y] = -(-1)^{(|x|+n-1)(|y|+n-1)}[y,x],$$
$$[x,[y,z]] = [[x,y],z] + (-1)^{(|x|+n-1)(|y|+n-1)}[y,[x,z]],$$
$$[x,yz] = [x,y]z + (-1)^{(|x|+n-1)|y|} y[x,z].$$

\begin{theorem}[F.Cohen \cite{fcohen}]
The homology of the little $n$-cubes operad $E_n$ is isomorphic to the operad $\G_n$ of Gerstenhaber $n$-algebras. In particular, the homology of any $E_n$-algebra has the structure of a $\G_n$-algebra.
\end{theorem}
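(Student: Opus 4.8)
The plan is to identify the homology operad $\G_n = \HH_*(E_n)$ with the rational homology of configuration spaces, extract the two binary generators from the lowest arity, match them with the defining presentation of the Gerstenhaber operad, and finally confirm that the resulting morphism is an isomorphism by an arity-wise dimension count. First I would invoke the classical fact that the $k$-th space of the little $n$-cubes operad is homotopy equivalent, compatibly with the operadic composition maps, to the configuration space $\operatorname{Conf}(k,\RR^n)$ of $k$ ordered points in $\RR^n$. Passing to rational homology reduces the problem to understanding $\HH_*(\operatorname{Conf}(k,\RR^n);\QQ)$ together with its operad structure.

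The binary part is the decisive input. Since $\operatorname{Conf}(2,\RR^n)\simeq S^{n-1}$, the homology $\HH_*(E_n(2))$ is concentrated in degrees $0$ and $n-1$, each one-dimensional. The fundamental class in degree $0$ furnishes a binary operation $\mu$, and the top class in degree $n-1$ furnishes a binary operation $\lambda$ of degree $n-1$; these are the candidate product and bracket. I would then verify that $\mu$ and $\lambda$ satisfy the defining relations of a Gerstenhaber $n$-algebra by inspecting arity $3$, where all composites live in $\HH_*(\operatorname{Conf}(3,\RR^n))$. The associativity and commutativity of $\mu$ reflect that $\HH_0$ of the configuration operad is $\Com$; the antisymmetry and Jacobi identity for $\lambda$ descend from the Arnold relation $\alpha_{12}\alpha_{23}+\alpha_{23}\alpha_{31}+\alpha_{31}\alpha_{12}=0$ among the linking classes; and the Leibniz relation records the compatibility of operadic insertion with the product. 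This yields a morphism of operads $\G_n\to\HH_*(E_n)$.

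To see that this morphism is an isomorphism I would compare dimensions in each arity. On the topological side, the Fadell--Neuwirth fibrations
$$\RR^n\setminus\{k-1\text{ points}\}\to\operatorname{Conf}(k,\RR^n)\to\operatorname{Conf}(k-1,\RR^n)$$
have fiber a wedge of $(k-1)$ copies of $S^{n-1}$; rationally the Serre spectral sequence collapses, giving Poincar\'e series $\prod_{j=1}^{k-1}(1+jt^{\,n-1})$ and hence total dimension $k!$. On the algebraic side, the Gerstenhaber operad is a Poisson-type operad, and a Poincar\'e--Birkhoff--Witt argument identifying its underlying symmetric sequence with the composite $\Com\circ\Lie$ shows $\dim_\QQ\G_n(k)=k!$ as well. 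Surjectivity of $\G_n\to\HH_*(E_n)$ follows because $\HH_*(\operatorname{Conf}(k,\RR^n))$ is generated as an algebra by the linking classes $\alpha_{ij}$, each of which is the image of $\lambda$ under an operadic insertion; a surjection between vector spaces of equal finite dimension is an isomorphism.

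The hardest part is the generation step: proving that the two binary classes $\mu$ and $\lambda$ generate the entire homology operad under composition, equivalently that $\HH_*(\operatorname{Conf}(k,\RR^n);\QQ)$ is generated by the linking classes subject only to the Arnold relations. Establishing this presentation rigorously is exactly the content of F.~Cohen's original computation, and it is where the real work lies; once it is in hand, the dimension count closes the argument cleanly.
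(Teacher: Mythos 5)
The paper offers no proof of this statement at all: it is quoted as a classical theorem of F.~Cohen, with \cite{fcohen} as the reference, so there is no internal argument to compare yours against. Judged on its own, your outline follows the standard strategy (configuration-space model, binary classes $\mu$ and $\lambda$ from $\HH_*(S^{n-1})$, arity-$3$ relation checks, Fadell--Neuwirth dimension count giving $\dim_\QQ \HH_*(E_n(k);\QQ) = k!$, and the PBW spanning bound $\dim_\QQ \G_n(k)\leq k!$ from the $\Com\circ\Lie$ decomposition), and those parts are sound. But the argument has a genuine gap at its pivot: the surjectivity of $\G_n \to \HH_*(E_n)$, i.e.\ that the two binary classes generate the whole homology operad under composition. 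You acknowledge this is the hard step and then defer it to ``F.~Cohen's original computation'' --- which is circular, since that computation \emph{is} the theorem being proved. Without an independent proof of generation, the dimension count closes nothing.

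The justification you do offer for surjectivity --- that $\HH_*(\operatorname{Conf}(k,\RR^n);\QQ)$ is ``generated as an algebra by the linking classes'' --- conflates homology with cohomology. The classes $a_{ij}$ generate the cohomology \emph{ring}, but the operad structure lives on homology, which carries no ring structure here, and ring generation of $\HH^*$ does not formally transfer to operadic generation of $\HH_*$. To close the gap one must produce, for each monomial in a basis of $\HH^*(\operatorname{Conf}(k,\RR^n);\QQ)$ (e.g.\ the PBW-type monomial basis appearing in Section \ref{sec:examples} of this paper), an explicit operadic composite of $\mu$ and $\lambda$ --- geometrically, a product of spheres mapped into the configuration space --- and verify that the resulting pairing is nondegenerate (upper-triangular with respect to suitable orderings). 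That construction is precisely the content of Cohen's work and cannot be waved through. A smaller imprecision: configuration spaces do not strictly form an operad (one needs the little cubes themselves or a Fulton--MacPherson-type compactification), so ``homotopy equivalent compatibly with the operadic composition maps'' needs care, though at the level of homology this is harmless.
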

The prototypical example of an $E_n$-algebra is the $n$-fold loop space $\Omega^n X$ of a based topological space $X$ \cite{BV,May}. The homology $\HH_*(\Omega^n X;\QQ)$ is a $\G_n$-algebra where $\mu$ is the Pontryagin product and $\lambda$ the Browder bracket. Any graded commutative algebra $A$ may be viewed as a $\G_n$-algebra by setting $\lambda = 0$.

\begin{theorem}[Getzler-Jones \cite{GJ}]
The operad $\G_n$ is Koszul and it is Koszul self-dual up to a suspension; $\G_n^! = \Sigma^{n-1} \G_n$.
\end{theorem}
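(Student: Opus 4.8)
The plan is to treat $\G_n$ as a quadratic operad and to deduce both its Koszulness and the self-duality from the behaviour of Koszul duality under distributive laws, following \cite{GJ} (see also \cite[\S 8.6]{LV}). First I would record the quadratic presentation that is read off directly from the Gerstenhaber $n$-algebra axioms. The space of generators splits as $E = E_\mu \oplus E_\lambda$: the commutative product $\mu$ spans a degree $0$ copy of the trivial $\Sigma_2$-representation, and the bracket $\lambda$ spans a degree $n-1$ copy of the sign representation twisted by the parity of $n-1$, which is exactly what encodes the graded antisymmetry $[x,y] = -(-1)^{(|x|+n-1)(|y|+n-1)}[y,x]$. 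The relations $R$ sit in the weight-two part of the free operad on $E$ and are of three kinds: associativity of $\mu$, the shifted Jacobi identity for $\lambda$, and the Poisson--Leibniz relation coupling the two. Since all three are quadratic, $\G_n$ is a binary quadratic operad, as required before any Koszul machinery applies.

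The key structural observation is that the Leibniz relation is a \emph{distributive law}: it rewrites a bracket of a product as a sum of products of brackets. Thus $\G_n$ should be exhibited as the composite $\Com \vee_\Lambda \Sigma^{n-1}\Lie$ of the Koszul operads $\Com$ and the $(n-1)$-fold operadic suspension $\Sigma^{n-1}\Lie$, glued by the Poisson distributive law $\Lambda$. I would then verify the hypotheses of the distributive-law theorem: that $\Lambda$ is a well-defined, structure-compatible morphism of $\Sigma$-modules and --- the crux --- that the induced surjection $\Com \circ \Sigma^{n-1}\Lie \to \G_n$ is an isomorphism of $\Sigma$-modules. This is a Poincar\'e-series statement of PBW type: the rewriting is confluent, so the operadic Diamond Lemma furnishes a normal form (brackets innermost, products outermost) and yields $\dim \G_n(k) = \dim(\Com\circ\Lie)(k) = k!$, certifying that there are no hidden relations.

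With the distributive law in place, the theorem that Koszulness is preserved under compatible distributive laws \cite[\S 8.6]{LV} gives at once that $\G_n$ is Koszul and that its Koszul dual is the composite of the duals under the dual distributive law, $\G_n^! = (\Sigma^{n-1}\Lie)^! \vee_{\Lambda^!} \Com^!$. Now $\Com^! = \Lie$ and $\Lie^! = \Com$, so duality interchanges the two factors; the operadic suspension on the Lie factor is converted into a compensating desuspension on the resulting commutative factor, and the orthogonal of the Leibniz relation is again of Leibniz type. Reassembling the two factors and collecting the total degree shift produces a shifted copy of $\G_n$ itself, namely $\Sigma^{n-1}\G_n$. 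As a sanity check, the case $n=1$ recovers the classical self-duality of the ordinary Poisson operad, $\G_1^! = \G_1$.

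The main obstacle is twofold. The genuinely substantive point is the confluence and PBW count establishing $\Com\circ\Sigma^{n-1}\Lie \stackrel{\sim}{\to}\G_n$, since without it the distributive-law theorem does not apply and $\G_n$ could a priori carry extra relations. The delicate bookkeeping point is pinning the shift down to exactly $\Sigma^{n-1}$: one must track how operadic (de)suspension twists the $\Sigma_2$-symmetry of each generator and shifts its degree under the passage $\Op \mapsto (\Sigma\Coop)^\vee$ from an operad to its Koszul dual, and then check that the orthogonal relation matches the Poisson relations of a suspended $\G_n$ rather than of some unshifted variant. I would settle this last identification by computing the induced degree-two pairing on the arity-three generators explicitly, exactly as in the display of signs following Theorem \ref{thm:quadratic}, so that the signs $\epsilon$ and $\eta$ there control the self-dual matching of relations.
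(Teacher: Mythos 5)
This theorem is stated in the paper as a black-box citation to Getzler--Jones \cite{GJ}; the paper contains no proof of it, so there is no internal argument to compare against, and the right benchmark is the literature. Your sketch is the standard distributive-law proof --- due in this packaged form to Markl rather than to Getzler--Jones, whose original treatment proceeds along different lines --- and it is exactly the argument presented in \cite[\S 8.6, \S 13.3]{LV}; note that the paper itself already invokes its output, the decomposition $\G_n = \Com \circ \Sigma^{n-1}\Lie$ from \cite[13.3.14]{LV}, in the proof of Proposition \ref{prop:comgn}. Your outline is correct and isolates the two genuinely load-bearing points, with one simplification and one warning worth recording. The simplification: for the ``crux'' you describe (confluence and the PBW count certifying $\Com\circ\Sigma^{n-1}\Lie \stackrel{\cong}{\rightarrow} \G_n$ as $\Sigma$-modules), the distributive-law theorem of \cite[\S 8.6]{LV} reduces the global verification to injectivity in weight (arity) $3$ alone, a finite check on the three relation types; your generating-function count $\dim \G_n(k) = k!$ (compose $e^x-1$ with $-\log(1-x)$ to get $x/(1-x)$) then settles the full statement, so the two halves of your argument reinforce each other rather than both being needed from scratch. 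The warning: the shift bookkeeping you flag as delicate is indeed where a blind computation most easily goes wrong. With the paper's convention $\Op^! = (\Sigma\Coop)^\vee$, a naive application of $(\Sigma\Op)^! = \Sigma^{-1}\Op^!$ to the swapped composite $(\Sigma^{n-1}\Lie)^! \vee_{\Lambda^!} \Com^! $ yields $\Sigma^{1-n}\Com \vee \Lie$, which one is tempted to read as $\Sigma^{1-n}\G_n$ rather than the asserted $\Sigma^{n-1}\G_n$; the discrepancy is resolved only by the arity-wise linear dualization (as in $(sC(p))^\vee$), which reverses internal degrees. So your closing plan --- pinning the shift by explicitly computing the degree-$2$ pairing of Theorem \ref{thm:quadratic} on the weight-$2$ generators --- is not optional polish but the step that actually fixes the sign of the suspension. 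Finally, your identification of the $\Sigma_2$-representation carried by $\lambda$ (sign twisted by the parity of $n-1$) is correct, matching the degree $(-1)^n$ of the antipodal map on $E_n(2) \simeq S^{n-1}$.
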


If $A$ is a Koszul $\G_n$-algebra, then the Koszul dual $A^!$ as defined in Section \ref{sec:koszul} is an algebra over $\G_n^! = \Sigma^{n-1} \G_n$. In order to get an algebra over $\G_n$ again, we define the \emph{Koszul dual $\G_n$-algebra of $A$} to be the desuspension $A^{!_{\G_n}} = s^{1-n} A^!$.

\begin{proposition} \label{prop:comgn}
Let $A$ be a commutative algebra viewed as a $\G_n$-algebra with trivial Lie bracket. Then $A$ is Koszul as a commutative algebra if and only if it is Koszul as a $\G_n$-algebra. In this situation there is an isomorphism of $\G_n$-algebras
$$A^{!_{\G_n}} \cong \Lambda (s^{1-n} A^{!_{\Lie}}).$$
\end{proposition}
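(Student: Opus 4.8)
The plan is to write down an explicit quadratic presentation of $A$ as a $\G_n$-algebra, to compute the orthogonal dual using the self-duality $\G_n^! = \Sigma^{n-1}\G_n$, and finally to compare Koszul complexes to deduce the equivalence of the two Koszul properties. Whichever of the two Koszul hypotheses we assume, Theorem \ref{thm:quadratic} makes $A$ quadratic (over $\Com$, respectively over $\G_n$). Since the bracket on $A$ is zero, every bracket of generators must be a relation, so in both cases $A$ has the $\G_n$-presentation $A = \G_n[V]/(R')$ with $R' = R \oplus L_V$, where $\G_n[V](2) = P_V \oplus L_V$ is the decomposition of the arity-two part of the free $\G_n$-algebra into the product part $P_V = \Com(2)\tensor_{\Sigma_2}V^{\tensor 2}$ and the degree $n-1$ bracket part $L_V$, and $R \subseteq P_V$ is the space of commutative relations; thus $A = \Com[V]/(R)$ as a commutative algebra.

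Next I would compute the orthogonal dual. By the remark following Theorem \ref{thm:quadratic}, the orthogonal presentation defines a quadratic $\G_n^!$-algebra $A^! = \G_n^![W]/(R'^\perp)$ with $W = (sV)^\vee$. The key structural input is that $\G_n^!(2) \cong \G_n(2)^\vee \tensor \mathrm{sgn}$ (up to the overall suspension), so passing to the Koszul dual interchanges the symmetry types of the two generators: the generator dual to the product $\mu$ becomes antisymmetric, hence \emph{bracket}-like, while the generator dual to the bracket $\lambda$ becomes symmetric, hence \emph{product}-like. Writing $\G_n^![W](2) = P_W \oplus L_W$ accordingly, the degree-two pairing of Theorem \ref{thm:quadratic} is perfect between $P_W$ and $L_V$ and between $L_W$ and $P_V$, and vanishes on the other two blocks. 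Since $R'$ contains all of $L_V$, the $P_W$-component of $R'^\perp$ vanishes, while the $L_W$-component is precisely the annihilator $R^\perp$ of $R \subseteq P_V$. Hence $R'^\perp = R^\perp \subseteq L_W$ consists of bracket relations only. Via the Poincar\'e--Birkhoff--Witt isomorphism $\G_n^![W] \cong \Lambda(\Sigma^{n-1}\Lie[W])$, imposing only the quadratic bracket relations $R^\perp$ cuts the free Lie factor down to the quadratic Lie algebra $\Lie[W]/(R^\perp)$, which is $A^{!_\Lie}$ up to the suspension inherent in $\G_n^! = \Sigma^{n-1}\G_n$. Therefore $A^!$ is the $n$-Poisson envelope $\Lambda(\Sigma^{n-1}A^{!_\Lie})$, and desuspending gives $A^{!_{\G_n}} = s^{1-n}A^! \cong \Lambda(s^{1-n}A^{!_\Lie})$, with the degree $n-1$ bracket extending the Lie bracket on the generators, as asserted.

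It remains to prove that $A$ is Koszul as a $\G_n$-algebra if and only if it is Koszul as a commutative algebra. As everything is quadratic, the commutative (respectively $\G_n$-) Koszul property is equivalent to acyclicity of the Koszul complex of the quadratic pair $(A, A^{!_\Lie})$ (respectively $(A, A^!)$), or equivalently, by Corollary \ref{cor:formal}, to formality of the corresponding bar construction. I would exploit the distributive law $\G_n = \Com \circ \Sigma^{n-1}\Lie$ between the two Koszul operads $\Com$ and $\Sigma^{n-1}\Lie$: filtering $A^\as$ by its cocommutative ``symmetric length'', the associated graded of the $\G_n$-Koszul complex of $A$ splits as the commutative Koszul complex of $A$ tensored with the acyclic Koszul resolution supplied by the $\Sigma^{n-1}\Lie$-direction, because the bracket on $A$ vanishes. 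A comparison of the resulting spectral sequences then shows that the $\G_n$-complex is acyclic exactly when the commutative one is. The hard part will be making this factorization of differentials precise: one must verify that the Leibniz interaction between the product and the bracket in $\G_n$ feeds only into the auxiliary acyclic direction once the bracket on $A$ is set to zero, so that no cross terms obstruct the degeneration. Granting this, the two Koszul properties coincide and, in the Koszul case, $A^{!_{\G_n}} = s^{1-n}A^!$ is the genuine Koszul dual, which finishes the proof.
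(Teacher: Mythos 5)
Your computation of the orthogonal dual is correct and takes a genuinely more explicit route to the isomorphism $A^{!_{\G_n}} \cong \Lambda(s^{1-n}A^{!_{\Lie}})$ than the paper does. The block structure of the degree-two pairing is right: under the self-duality $\G_n^! = \Sigma^{n-1}\G_n$ the generator dual to the product pairs with the bracket part and vice versa, so with $R' = R \oplus L_V$ one indeed gets $R'^\perp = R^\perp \subseteq L_W$; and since $\G_n^![W] \cong \Lambda(\Lie_{n-1}[W])$ (where $\Lie_{n-1} = \Sigma^{n-1}\Lie$) and $\Lambda(-)$ is the enveloping-$n$-Poisson-algebra left adjoint, the $\G_n^!$-ideal generated by the Lie elements $R^\perp$ cuts the free Lie factor down to the quotient Lie algebra, as you claim. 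The paper instead obtains this statement in one line, as a by-product of its bar-level identification together with $\Lie_{n-1}^! = \Sigma^{n-1}\Com$; your version buys an explicit presentation-level verification at the cost of more bookkeeping.

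The genuine gap is exactly where you flag it, in the equivalence of the two Koszul properties, and the spectral-sequence packaging is not just incomplete but the wrong tool for one direction. First, the factorization you defer is not merely an associated-graded statement: because the bracket on $A$ is identically zero, the component of the bar differential induced by the twisting morphism through the bracket generator of $\G_n(2)$ vanishes on the nose, so via the distributive-law identification $\G_n^\as \cong \Lie_{n-1}^\as \circ \Com^\as$ one has an honest isomorphism of dg $\G_n^\as$-coalgebras $B_{\G_n}(A) \cong \Lie_{n-1}^\as[B_{\Com}(A)]$ --- no filtration, no cross terms to control. This chain-level splitting is the paper's key step. Second, your claim that ``a comparison of the resulting spectral sequences shows that the $\G_n$-complex is acyclic exactly when the commutative one is'' fails in the backward direction as stated: spectral-sequence comparison propagates from the $E_1$-page to the abutment, so acyclicity (or formality) of the total object does not in general descend to the associated graded. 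To recover the ``$\G_n$-Koszul $\Rightarrow$ commutative Koszul'' implication you need the genuine splitting above, after which formality of $B_{\G_n}(A)$ as a $\G_n^\as$-coalgebra is equivalent to formality of $B_{\Com}(A)$ as a $\Com^\as$-coalgebra, and Corollary \ref{cor:formal} converts this into the equivalence of the two Koszul properties. With that replacement your argument closes up and coincides in substance with the paper's proof.
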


\begin{proof}
This is a consequence of the decomposition of operads $\G_n = \Com \circ \Lie_{n-1}$, where $\Lie_{n-1} = \Sigma^{n-1}\Lie$, see \cite[13.3.14]{LV}. Since the Lie algebra structure on $A$ is trivial, the $\G_n$ bar construction of $A$ is isomorphic to
$$B_{\G_n}(A) \cong \Lie_{n-1}^\as[B_{\Com}(A)].$$
It follows that $B_{\G_n}(A)$ is formal as a $\G_n^\as$-coalgebra if and only if $B_{\Com}(A)$ is formal as a $\Com^\as$-coalgebra. This proves the first part of the proposition. For the second part, simply observe that $\Lie_{n-1}^! = \Sigma^{n-1}\Com$.
\end{proof}

\begin{proposition} \label{prop:mm}
For any $n$-connected space $X$ there is an isomorphism of $\G_n$-algebras
$$\HH_*(\Omega^n X;\QQ) \cong \Lambda(s^{-n} \pi_*(X)\tensor \QQ)$$
where the Lie bracket on the right hand side is induced by Whitehead products on the homotopy groups of $X$.
\end{proposition}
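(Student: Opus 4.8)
The plan is to read off both the underlying graded-commutative algebra and the Browder bracket of $\HH_*(\Omega^n X;\QQ)$ from classical structure theory for $H$-spaces. Assume $n\geq 2$, so that $\Omega^n X$ is a double (hence homotopy-commutative) loop space; the case $n=1$ is the classical Bott--Samelson computation, where $\G_1$ is the associative operad and the free object must be read as a universal enveloping algebra rather than a free commutative algebra. Since $X$ is $n$-connected, $\Omega^n X$ is connected and $\pi_k(\Omega^n X)\tensor\QQ\cong\pi_{k+n}(X)\tensor\QQ$ for all $k$, whence $\pi_*(\Omega^n X)\tensor\QQ\cong s^{-n}\pi_*(X)\tensor\QQ$ as graded vector spaces. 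Because $\Omega^n X$ is a homotopy-commutative connected $H$-space, $\HH_*(\Omega^n X;\QQ)$ is a connected, graded-commutative and cocommutative Hopf algebra over $\QQ$; by the Milnor--Moore structure theorem in characteristic zero it is the free graded-commutative algebra $\Lambda(P)$ on its primitives, and by the Cartan--Serre theorem the rational Hurewicz map identifies $P$ with $\pi_*(\Omega^n X)\tensor\QQ\cong s^{-n}\pi_*(X)\tensor\QQ$. This already yields the asserted isomorphism of graded-commutative algebras, and it remains to match the Browder bracket $\lambda$.

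The next step uses that in any $\G_n$-algebra the bracket $\lambda$ is a biderivation with respect to the product $\mu$, namely the relation $[x,yz]=[x,y]z+(-1)^{(|x|+n-1)|y|}y[x,z]$. Since $P$ generates $\HH_*(\Omega^n X;\QQ)$ as a commutative algebra, $\lambda$ is completely determined by its restriction to $P\tensor P$; hence it suffices to compute $\lambda$ on pairs of spherical (primitive) classes. The essential input is that, under the rational Hurewicz map and the identification $\pi_*(\Omega^n X)\tensor\QQ\cong s^{-n}\pi_*(X)\tensor\QQ$, the Browder bracket of two spherical classes is, up to sign, the spherical class of the Whitehead product of the corresponding homotopy elements; this in particular shows that $\lambda$ maps $P\tensor P$ back into $P$. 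I would extract this from F.~Cohen's analysis of the Browder operation on iterated loop spaces \cite{fcohen} (the higher-dimensional form of the classical Samelson--Whitehead correspondence), checking that the degrees match: a Whitehead product $\pi_p(X)\tensor\pi_q(X)\to\pi_{p+q-1}(X)$ desuspends to a bracket of degree $n-1$ on $s^{-n}\pi_*(X)\tensor\QQ$, precisely the degree of the Browder bracket.

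With these two ingredients the conclusion is formal: both $\HH_*(\Omega^n X;\QQ)$ and $\Lambda(s^{-n}\pi_*(X)\tensor\QQ)$ are free graded-commutative algebras on the same space of generators, carrying biderivation brackets that agree on generators, so they coincide as $\G_n$-algebras and the Browder bracket is the one induced by Whitehead products. I expect the genuine obstacle to be the bracket identification: pinning down the precise relation between the Browder bracket on spherical homology classes and the Whitehead product, with the correct signs under the $s^{-n}$ desuspension, is the delicate point, and it is here that the cited work of F.~Cohen does the real work.
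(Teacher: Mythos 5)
Your proposal is correct and follows essentially the same route as the paper's proof: the rational Hurewicz/Milnor--Moore identification of $\HH_*(\Omega^n X;\QQ)$ with the free graded-commutative algebra on $\pi_*(\Omega^n X)\tensor\QQ \cong s^{-n}\pi_*(X)\tensor\QQ$ (the paper phrases the commutativity via the vanishing of Samelson products on $\pi_*(\Omega^n X)$ for $n\geq 2$, which is the same fact as your homotopy-commutativity of double loop spaces), followed by F.~Cohen's theorem that the Hurewicz map carries Whitehead products to Browder brackets. Your two explicit refinements --- flagging the $n=1$ case, and noting that the biderivation (Poisson) relation reduces the bracket comparison to primitives --- are points the paper leaves implicit, and are correct.
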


\begin{proof}
By the Milnor-Moore theorem \cite[p.263]{MM}, since $\Omega^n X$ is connected the Hurewicz homomorphism
$$h\colon \pi_*(\Omega^n X)\tensor\QQ \rightarrow \HH_*(\Omega^n X;\QQ)$$
induces an isomorphism of graded algebras between $\HH_*(\Omega^n X;\QQ)$ and the universal enveloping algebra of the graded Lie algebra $\pi_*(\Omega^n X)\tensor\QQ$ with the Samelson product. But for $n\geq 2$, the Samelson product on $\pi_*(\Omega^n X)$ is trivial, so the universal enveloping algebra is a free graded commutative algebra and we get that the Hurewicz homomorphism induces an isomorphism of graded algebras
$$\Lambda (\pi_*(\Omega^n X)\tensor \QQ) \stackrel{\cong}{\rightarrow} \HH_*(\Omega^n X;\QQ).$$
To finish the proof we note that under the identification $\pi_*(\Omega^n X) = s^{-n}\pi_*(X)$, the Hurewicz homomorphism takes Whitehead products in $\pi_*(X)$ to Browder brackets in $\HH_*(\Omega^n X;\QQ)$ \cite[p.215, p.318]{fcohen}.
\end{proof}

\begin{proof}[Proof of Theorem \ref{thm:loop}]
The proof consists in assembling three facts:
\begin{itemize}
\item For any Koszul space $X$ there is an isomorphism of graded Lie algebras
$$s^{-1} \pi_*(X)\tensor \QQ \cong \HH^*(X;\QQ)^{!_{\Lie}}$$
where the Lie bracket on the left hand side is induced by Whitehead products on $\pi_*(X)$ (Theorem \ref{thm:koszul dual}).

\item For any commutative Koszul algebra $A$ viewed as a $\G_n$-algebra with trivial Lie bracket there is an isomorphism of $\G_n$-algebras
$$A^{!_{\G_n}} \cong \Lambda(s^{1-n} A^{!_{\Lie}})$$
(Proposition \ref{prop:comgn}).

\item For any $n$-connected space $X$ there is an isomorphism of $\G_n$-algebras
$$\HH_*(\Omega^n X;\QQ) \cong \Lambda(s^{-n} \pi_*(X)\tensor \QQ)$$
where the Lie bracket on the right hand side is induced by Whitehead products on $\pi_*(X)$ (Proposition \ref{prop:mm}).
\end{itemize}
\end{proof}

\section{Examples of Koszul spaces} \label{sec:examples}
\begin{example} \emph{Spheres}. The first example of a Koszul space is the sphere $S^n$, where $n\geq 1$. Spheres are formal, and the cohomology algebra $\HH^*(S^n;\QQ)$ is generated by a class $x$ of cohomological degree $n$ modulo the relation $x^2 = 0$. The Koszul dual Lie algebra is the free graded Lie algebra $\LL(\alpha)$ on a generator $\alpha$ of homological degree $n-1$.
$$\pi_*(\Omega S^n)\tensor \QQ \cong \LL(\alpha) = \left\{ \begin{array}{cc} \langle \alpha, [\alpha,\alpha] \rangle_\QQ, & \mbox{$n$ even} \\ \langle \alpha \rangle_\QQ, & \mbox{$n$ odd}  \end{array} \right.$$
This gives yet another way of seeing Serre's \cite{Serre-h} classical result that $\pi_i(S^n)\tensor \QQ = \QQ$ for $i = n$, and $i = 2n-1$ if $n$ is even, and $\pi_i(S^n)\tensor \QQ = 0$ else.
\end{example}

\begin{example} \emph{Suspensions}. More generally, the suspension $\Sigma X$ of any connected space $X$ is a Koszul space. A suspension is formal (indeed rationally homotopy equivalent to a wedge of spheres, see \cite[Theorem 24.5]{FHT-RHT}) and the cohomology $\HH^*(\Sigma X;\QQ)$ is the trivial algebra $\perp(\rH{}^*(\Sigma X;\QQ))$ generated by the reduced cohomology. Here $\perp(V) = \Lambda(V)/(\Lambda^2 V)$. The Koszul dual is a free graded Lie algebra on the reduced homology of $X$
$$\pi_*(\Omega \Sigma X)\tensor \QQ \cong \LL(\rH_*(X;\QQ)).$$
\end{example}

\begin{example} \emph{Loop spaces}. The loop space $\Omega X$ of any $1$-connected space $X$ is a Koszul space with cohomology the free graded commutative algebra $\Lambda(V)$ on a graded vector space $V$. The Koszul dual Lie algebra is the abelian Lie algebra
$$\pi_*(\Omega^2 X)\tensor \QQ \cong s^{-2} V^\vee.$$
\end{example}

\begin{example} \emph{Products and wedges}.
If $X$ and $Y$ are Koszul spaces then so are $X\times Y$ and $X\vee Y$. Indeed, it is easy to check that products and wedges of (co)formal spaces are (co)formal, see e.g., \cite[Lemma 4.1]{NM}. On the level of cohomology, this is reflected by the fact that tensor and fiber products of Koszul algebras are Koszul:
\begin{align*}
\HH^*(X\times Y;\QQ) & \cong \HH^*(X;\QQ)\tensor \HH^*(Y;\QQ), \\
\HH^*(X\vee Y;\QQ) & \cong \HH^*(X;\QQ)\times_\QQ \HH^*(Y;\QQ).
\end{align*}
\end{example}

\begin{example} \emph{Configuration spaces.}
For any $k$ and $n$ the configuration space $F(\RR^n,k)$ of $k$ points in $\RR^n$ is a Koszul space. This follows because configuration spaces are formal (see e.g. \cite{LamV}) and their cohomology algebras are Koszul. The rational cohomology algebra is generated by elements $a_{pq}$ of cohomological degree $n-1$ for $1\leq p<q\leq k$ subject to the `Arnold relations'
\begin{eqnarray*}
& a_{pq}a_{qr} + a_{qr}a_{rp} + a_{rp}a_{pq} = 0,\quad \mbox{$p,q,r$ distinct}, \\
& a_{pq}^2 = 0, \quad \mbox{($n$ odd)}
\end{eqnarray*}
Here we use the convention $a_{pq} = (-1)^n a_{qp}$ for $p>q$. This algebra has a PBW-basis \cite{Priddy} consisting of all monomials $a_{i_1 j_1}\ldots a_{i_r j_r}$ where $i_1<\ldots < i_r$ and $i_p<j_p$ for all $p$, and therefore it is Koszul. Hence, we have an isomorphism of graded Lie algebras $\pi_*(\Omega F(\RR^n,k))\tensor \QQ = \HH^*(F(\RR^n,k);\QQ)^{!_{\Lie}}$. When calculating the orthogonal relations we recover \cite[Theorem 2.3]{CG}: as a graded Lie algebra $\pi_*(\Omega F(\RR^n,k))\tensor \QQ$ is generated by classes $\alpha_{pq}$ of homological degree $n-2$ for $1\leq p<q\leq k$ subject to the `infinitesimal braid relations' or `Yang-Baxter Lie algebra relations'
\begin{align*}
[\alpha_{pq},\alpha_{rs}] & = 0, \quad \{p,q\}\cap \{r,s\} = \emptyset, \\
[\alpha_{pq}, \alpha_{pr} + \alpha_{qr}] & = 0, \quad \mbox{$p,q,r$ distinct}.
\end{align*}
Again, we use the convention that $\alpha_{qp} = (-1)^n \alpha_{qp}$ for $p>q$. These presentations for the cohomology and homotopy Lie algebra of configuration spaces are well-known, but we stress that it is possible to \emph{derive} the presentation of $\pi_*(\Omega F(\RR^n,k))\tensor \QQ$ from the presentation of $\HH^*(F(\RR^n,k);\QQ)$ using the fact that $F(\RR^n,k)$ is a Koszul space.
\end{example}

\begin{example} \emph{Highly connected manifolds.} \label{cor:dim}
Let $M$ be a $(d-1)$-connected closed $m$-dimensional manifold with $m\leq 3d-2$, $d\geq 2$ and where $\dim_\QQ \HH^*(M;\QQ) \geq 4$. By \cite[Proposition 4.4]{NM} any such manifold is formal and coformal, whence a Koszul space by Theorem \ref{thm:koszul space}. By Poincar\'e duality and for degree reasons, the rational cohomology of $M$ admits a basis $1,x_1,\ldots,x_n,\omega$ where $\omega$ is a generator of $\HH^m(M;\QQ) \cong \QQ$ and where $x_1,\ldots,x_n$ are indecomposable with respect to the cup product. Therefore, the cohomology algebra is completely determined by the structure coefficients $q_{ij}\in\QQ$ where
$$x_i x_j = q_{ij} \omega.$$
By Theorem \ref{thm:koszul dual}, we can compute the rational homotopy Lie algebra of $M$ by finding the orthogonal relations. Clearly, a relation
$$\sum_{i,j} c_{ij} x_ix_j = 0$$
holds if and only if $\sum_{i,j} q_{ij} c_{ij} = 0$. So we get that $\pi_*(\Omega M)\tensor \QQ$ is a free graded Lie algebra on classes $\alpha_1,\ldots,\alpha_n$ modulo a single quadratic Lie form
$$\pi_*(\Omega M)\tensor \QQ \cong \LL(\alpha_1,\ldots,\alpha_n)/(Q),$$
$$Q = \sum_{i,j} (-1)^{|x_i||\alpha_j|}q_{ij} [\alpha_i,\alpha_j].$$
This agrees with the presentation obtained in \cite[Theorem 5]{Neisendorfer-ci} up to a sign.
\end{example}


\begin{thebibliography}{99}
\bibitem{Anick1} D.J. Anick, {\it A counterexample to a conjecture of Serre}, Ann. of Math. (2) {\bf 115} (1982), no. 1, 1--33.

\bibitem{Anick2} D.J. Anick, {\it Comment: "A counterexample to a conjecture of Serre"},  Ann. of Math. (2) {\bf 116} (1982), no. 3, 661.

\bibitem{BL} H.J. Baues, J.-M. Lemaire, {\it Minimal models in homotopy theory}, Math. Ann. {\bf 225} (1977), no. 3, 219--242.

\bibitem{BV} J.M. Boardman, R.M. Vogt, {\it Homotopy invariant algebraic structures on topological spaces}, Lecture Notes in Mathematics {\bf 347}. Springer-Verlag, Berlin-New York, 1973.

\bibitem{BG} A. Bousfield, V.K.A.M. Gugenheim, {\it On $PL$ DeRham theory and rational homotopy type}, Mem. Amer. Math. Soc., vol. 8, No. 179, Amer. Math. Soc., Providence, R.I., 1976.

\bibitem{fcohen} F.R. Cohen, {\it The homology of $\mathcal{C}_{n+1}$-spaces, $n\geq 0$}, in ``The homology of iterated loop spaces'', Lecture Notes in Mathematics {\bf 533}, Springer Verlag (1976), 207--351.

\bibitem{CG} F.R. Cohen, S. Gitler, {\it On loop spaces of configuration spaces}, Trans. Amer. Math. Soc. {\bf 354} (2002), no. 5, 1705--1748.

\bibitem{FHT-RHT} Y. F\'elix, S. Halperin, J-C. Thomas, {\it Rational Homotopy Theory}, Graduate texts in Mathematics 205, Springer, 2001.

\bibitem{FOT} Y. F\'elix, J. Oprea, D. Tanr\'e, {\it Algebraic Models in Geometry}, Oxford Graduate Texts in Mathematics 17, Oxford University Press, 2008.

\bibitem{Fresse} B. Fresse, {\it Koszul duality of operads and homology of partition posets}, Homotopy theory: relations with algebraic geometry, group cohomology, and algebraic $K$-theory,  115--215, Contemp. Math., 346, Amer. Math. Soc., Providence, RI, 2004.

\bibitem{GJ} E. Getzler, J. D. S. Jones, {\it Operads, homotopy algebra and iterated integrals for double loop spaces}, arXiv:hep-th/9403055v1

\bibitem{GK} V. Ginzburg, M. Kapranov, {\it Koszul duality for operads}, Duke Math. J. {\bf 76} (1994), no. 1, 203--272.

\bibitem{LamV} P. Lambrechts, I. Volic, {\it Formality of the little $N$-disks operad}, arXiv:0808.0457v2 [math.AT]

\bibitem{LV} J-L. Loday, B. Vallette, {\it Algebraic operads}, \verb+http://math.unice.fr/~brunov/Operads.html+

\bibitem{Majewski} M. Majewski, {\it Rational homotopical models and uniqueness}, Mem. Amer. Math. Soc. {\bf 143} (2000), no. 682.

\bibitem{May} J.P. May, {\it The geometry of iterated loop spaces}, Lectures Notes in Mathematics {\bf 271}. Springer-Verlag, Berlin-New York, 1972.

\bibitem{Milles} J. Mill\`es, {\it The Koszul complex is the cotangent complex}, International Mathematics Research Notices, rnr034, (2011) 44p.

\bibitem{MM} J.W. Milnor, J.C. Moore, {\it On the structure of Hopf algebras}, Ann. of Math. (2) {\bf 81} (1965) 211--264.

\bibitem{Neisendorfer} J. Neisendorfer, {\it Lie algebras, coalgebras and rational homotopy theory for nilpotent spaces}, Pacific J. Math. {\bf 74} (1978), no. 2, 429--460.

\bibitem{Neisendorfer-ci} J. Neisendorfer, {\it The rational homotopy groups of complete intersections}, Illinois J. Math. {\bf 23} (1979), no. 2, 175--182.

\bibitem{NM} J. Neisendorfer, T. Miller, {\it Formal and coformal spaces}, Illinois J. Math. {\bf 22} no. 4 (1978), 565--580.

\bibitem{PS} S. Papadima, A. Suciu, {\it Homotopy Lie algebras, lower central series and the Koszul property}, Geom. Topol. {\bf 8} (2004), 1079--1125.

\bibitem{PY} S. Papadima, S. Yuzvinsky, {\it On rational $K[\pi,1]$ spaces and Koszul algebras}, J. Pure Appl. Algebra {\bf 144} (1999), 157--167.

\bibitem{Priddy} S.B. Priddy, {\it Koszul resolutions}, Trans. Amer. Math. Soc. {\bf 152} (1970) 39--60.

\bibitem{Quillen} D. Quillen, {\it Rational homotopy theory}, Ann. of Math. (2) {\bf 90} (1969) 205--295.

\bibitem{Serre-h} J-P. Serre, {\it Groupes d\textquoteright{}homotopie et classes de groupes ab\'elien}, Ann. of Math. {\bf 58} (1953), 258--294.

\bibitem{Serre} J-P. Serre, {\it Alg\`ebre locale. Multiplicit\'es}, Lecture Notes in Mathematics, {\bf 11} Springer-Verlag, Berlin-New York 1965.

\bibitem{Sullivan} D. Sullivan, {\it Infinitesimal computations in topology}, Inst. Hautes \'Etudes Sci. Publ. Math. No. 47 (1977), 269--331 (1978).

\bibitem{Vallette} B. Vallette, {\it Homotopy theory of homotopy algebras}, to appear.
\end{thebibliography}
\end{document}